\renewcommand{\epsilon}{\eps}
\newcommand{\N}{{\mathbb N}}
\newcommand{\R}{{\mathbb R}}
\newcommand{\rr}{{\mathbb R}}
\newcommand{\X}{{\mathbb X}}
\newcommand{\eps}{\varepsilon}
\def \LL {\mathcal{L}_{\alpha}}
\def \de {\partial}
\newcommand{\pnorm}[2][]{\if #1'' \left|#2\right|_p \else \left|#2\right|_{#1} \fi}
\newcommand{\escpr}[1]{\langle#1\rangle}
\def \LL {\mathcal{L}_{\alpha}}
\newtheorem*{axiom}{Assumptions on $f$}
\renewcommand{\theta}{\vartheta}
\renewcommand*{\backref}[1]{}
\renewcommand*{\backrefalt}[4]{
    \ifcase #1 (Not cited.)
    \or        (Cited on page~#2.)
    \else      (Cited on pages~#2.)
    \fi}
\newtheorem{theorem}{Theorem}[section]
\newtheorem{proposition}[theorem]{Proposition}
\newtheorem{lemma}[theorem]{Lemma}
\theoremstyle{definition}
\newtheorem{remark}[theorem]{Remark}
\newtheorem{definition}[theorem]{Definition} 
\theoremstyle{remark}
\numberwithin{equation}{section}
\begin{document}

\title[A local-nonlocal resonance problem ]{An Ahmad-Lazer-Paul-type result for indefinite mixed local-nonlocal problems}

\author[G.~Giovannardi]{Gianmarco Giovannardi}
\address{Dipartimento di Matematica Informatica \newline\indent"U. Dini", Università degli Studi di Firenze,\newline\indent Viale Morgani 67/A, 50134, Firenze, Italy}
\email{gianmarco.giovannardi@unifi.it}

 \author[D.~Mugnai]{Dimitri Mugnai}
 \address{Dipartimento di Ecologia e Biologia (DEB)
 \newline\indent Università della Tuscia \newline\indent
 Largo dell'Università, 01100 Viterbo, Italy}
 \email{dimitri.mugnai@unitus.it}

\author[E.~Vecchi]{Eugenio Vecchi}
\address{Dipartimento di Matematica
\newline\indent Università di Bologna
\newline\indent Piazza di Porta S. Donato 5, 40126 Bologna, Italy
}
\email{eugenio.vecchi2@unibo.it}

\date{\today}

\subjclass[2020]{35A15, 35J62, 35R11}

\keywords{Operators of mixed order, variational methods, Saddle Point Theorem, Fountain Theorem}

\thanks{G. Giovannardi is supported by INdAM-GNAMPA Project ``Analisi geometrica in strutture subriemanniane''. D. Mugnai is supported by the FFABR ``Fondo per il finanziamento delle attivit\`a base di ricerca'' 2017 and by the INdAM-GNAMPA Project ``PDE ellittiche a diffusione mista''.
E. Vecchi is supported by INdAM-GNAMPA Project ``PDE ellittiche a diffusione mista''.}

\bibliographystyle{abbrv}

\begin{abstract}
 We prove the existence and multiplicity of weak solutions for a mixed local-nonlocal problem at resonance. In particular, we consider a not necessarily positive operator which appears in models describing the propagation of flames. A careful adaptation of well known variational methods is required to deal with the possible existence of negative eigenvalues.
\end{abstract}

\maketitle

\thispagestyle{empty}

\section{Introduction}
Let $\Omega \subset \mathbb{R}^n$ be an open and bounded set with $C^1$-smooth boundary $\partial \Omega$ with $n>2$. We consider the following Dirichlet boundary value problem
\begin{equation}\label{eq:Problem}
\left\{ \begin{array}{rl}
\LL u = \lambda u + f(x,u), & \textrm{in } \Omega\\
u = 0, &\textrm{in } \mathbb{R}^n \setminus \Omega,
\end{array}\right.
\end{equation}
where 
\[
\LL u := -\Delta u +\alpha (-\Delta)^s u\,.
\]
Here $\alpha \in \mathbb{R}$ with no a priori restrictions, $\Delta u$ denotes the classical Laplace operator while $(-\Delta)^s u$, for fixed $s\in (0,1)$ and up to a multiplicative positive constant, is the fractional Laplacian, usually defined as
\[
(-\Delta)^s u(x) := C(n,s) \,\mbox{P.V.}\int_{\R^n}\frac{u(x)-u(y)}{|x-y|^{n+2s}}\, dy\,,
\]
where P.V. denotes the Cauchy principal value, that is
\[
\mbox{P.V.}\int_{\R^n}
  \frac{u(x)-u(y)}{|x-y|^{n+2s}}\, dy\\
  =\lim_{\epsilon\to0}\int_{\{y\in \R^n\,:\,|y-x|\geq \epsilon\}}
  \frac{u(x)-u(y)}{|x-y|^{n+2s}}\, dy,
\]
see \cite{LIBRO} for more details. 
Clearly, when $\alpha=0$, one recovers the classical Laplacian.
Finally, $\lambda \in \mathbb{R}$ is a variational Dirichlet eigenvalue of $\LL$ (hence \eqref{eq:Problem} is a problem at resonance), namely there are nontrivial solutions for the problem
\[
\left\{ \begin{array}{rl}
  \LL u = \lambda u, & \textrm{in } \Omega,\\
  u= 0, & \textrm{in } \mathbb{R}^n \setminus \Omega,
\end{array}\right.
\]
see Section \ref{sec.NotPrel} for the precise setting.

We suppose that $f$ satisfies the following assumptions.
\begin{axiom}
\item[$(f_{bc})$] $f:\overline{\Omega}\times \mathbb{R} \to \mathbb{R}$ is a bounded and  Carathéodory function, namely: 
\begin{enumerate}
\item $f(x, \cdot)$ is continuous in $\rr$ for a.e. $x \in \Omega$
\item $f(\cdot,t)$ is measurable in $\Omega$ for all $t \in \rr$.
\end{enumerate}
\item[$(F_{\pm\infty})$]
$$ \lim_{u\in Ker(\LL-\lambda),\atop \|u\|\to \infty}\int_\Omega F(x,u)dx=\pm\infty $$
uniformly in $x \in \Omega$.
\end{axiom}

\begin{remark}
The assumption $(F_{\pm \infty})$ is the so--called Ahmad-Lazer-Paul condition introduced in \cite{ALP} and often used in resonant problems, for instance see \cite{FoGa}.
A sufficient condition implying it is given by
\[
\displaystyle F(x,t)= \int_0^t f(x,\tau) d\tau \to \pm\infty \quad \quad \textrm{as } |t| \to +\infty,
\]
as can be quite easily checked, see e.g. \cite[Lemma 3.4]{FiSeVaZAMP}.
\end{remark} 
The goal of this paper is to extend to a mixed operator an existence and multiplicity result established in \cite{Rabinowitz78} for the Laplace operator. We state immediately our first result:
\begin{theorem}
\label{th:main}
Let $f$ satisfy $(f_{bc})$ and $(F_{\pm\infty})$ and suppose that $\lambda \in \rr$ is a variational Dirichlet eigenvalue of $\LL$. 
Then, the problem \eqref{eq:Problem} admits a weak solution $u \in \mathbb{X}(\Omega)$. 
\end{theorem}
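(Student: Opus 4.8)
The plan is to obtain the weak solution as a critical point of the energy functional
\[
J(u)=\frac12\,\mathcal{Q}(u)-\frac{\lambda}{2}\into u^{2}\,dx-\into F(x,u)\,dx ,\qquad u\in\X(\Omega),
\]
where $\mathcal{Q}(u)=\into|\nabla u|^{2}\,dx+\alpha\,[u]_{s}^{2}$ is the quadratic form of $\LL$ on $\X(\Omega)$, $[u]_{s}$ being the Gagliardo seminorm of $(-\Delta)^{s}$ (see Section~\ref{sec.NotPrel}). Under $(f_{bc})$ the functional $J$ is of class $C^{1}$ on $\X(\Omega)$ and its critical points are exactly the weak solutions of \eqref{eq:Problem}; moreover, since $f$ is bounded, $\big|\into F(x,v)\,dx\big|\le\|f\|_{\infty}\|v\|_{L^{1}(\Omega)}\le C\|v\|$ for every $v\in\X(\Omega)$ (here $\|\cdot\|$ denotes the norm of $\X(\Omega)$), a linear control that will be used repeatedly. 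Since $\alpha$ may be negative $\LL$ is in general indefinite, but $(-\Delta)^{s}$ is dominated by $-\Delta$, so $\LL$ is bounded below, has compact resolvent, and possesses a non-decreasing sequence of variational Dirichlet eigenvalues $\lambda_{1}\le\lambda_{2}\le\cdots\to+\infty$ with an $L^{2}(\Omega)$-orthonormal system of eigenfunctions which is also $\mathcal{Q}$-orthogonal. Writing $\lambda=\lambda_{m}$, I would split
\[
\X(\Omega)=V^{-}\oplus V^{0}\oplus V^{+},\qquad V^{0}=Ker(\LL-\lambda),
\]
with $V^{-}$ (resp.\ $V^{+}$) spanned by the eigenfunctions of eigenvalue $<\lambda$ (resp.\ $>\lambda$); then $V^{-}$ and $V^{0}$ are finite dimensional and, putting $Q(u):=\mathcal{Q}(u)-\lambda\into u^{2}\,dx$, there is $\delta>0$ with $Q\ge\delta\|\cdot\|^{2}$ on $V^{+}$, $Q\le-\delta\|\cdot\|^{2}$ on $V^{-}$ and $Q\equiv0$ on $V^{0}$, the three subspaces being mutually orthogonal both in $L^{2}(\Omega)$ and for $Q$.

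Next I would verify the Palais--Smale condition. Let $(u_{k})\subset\X(\Omega)$ satisfy $\sup_{k}|J(u_{k})|<\infty$ and $J'(u_{k})\to0$, and write $u_{k}=u_{k}^{-}+u_{k}^{0}+u_{k}^{+}$. Testing $J'(u_{k})$ with $u_{k}^{+}$ and with $u_{k}^{-}$, and using the orthogonality, the sign of $Q$ on $V^{\pm}$, and $\big|\into f(x,u_{k})\,v\,dx\big|\le\|f\|_{\infty}\|v\|_{L^{1}(\Omega)}\le C\|v\|$, one gets $\delta\|u_{k}^{\pm}\|^{2}\le(1+C)\|u_{k}^{\pm}\|$ for $k$ large, so $\|u_{k}^{\pm}\|$ are bounded. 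Hence $Q(u_{k}^{\pm})$ are bounded and, from $\sup_{k}|J(u_{k})|<\infty$ together with
\[
\Big|\into F(x,u_{k})\,dx-\into F(x,u_{k}^{0})\,dx\Big|\le\|f\|_{\infty}\,\|u_{k}^{+}+u_{k}^{-}\|_{L^{1}(\Omega)}\le C ,
\]
the sequence $\big(\into F(x,u_{k}^{0})\,dx\big)_{k}$ is bounded; since $u_{k}^{0}\in V^{0}$, assumption $(F_{\pm\infty})$ forces $\|u_{k}^{0}\|$ to be bounded, whence $(u_{k})$ is bounded in $\X(\Omega)$. Passing to a weakly convergent subsequence and using the compact embedding $\X(\Omega)\hookrightarrow\hookrightarrow L^{2}(\Omega)$, the boundedness and Carathéodory character of $f$ (dominated convergence for $f(x,u_{k})$ in $L^{2}(\Omega)$), and the coercivity on $\X(\Omega)$ of $u\mapsto\mathcal{Q}(u)+\mu\|u\|_{L^{2}(\Omega)}^{2}$ for $\mu$ large, a routine computation gives $u_{k}\to u$ strongly in $\X(\Omega)$.

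It remains to arrange the linking geometry, which I would do in two cases. If $(F_{+\infty})$ holds, set $W:=V^{+}$ and $V:=V^{-}\oplus V^{0}$ (finite dimensional): on $W$ one has $J(u)\ge\tfrac{\delta}{2}\|u\|^{2}-C\|u\|-C$, so $\inf_{W}J>-\infty$; on $V$, decoupling via the orthogonality, using $Q\le-\delta\|\cdot\|^{2}$ on $V^{-}$, the linear bound for $F$ on the $V^{-}$-component and $(F_{+\infty})$ (which bounds $\into F(x,\cdot)\,dx$ below on $V^{0}$ and drives it to $+\infty$ as the norm tends to $\infty$), one gets $J(u)\to-\infty$ as $\|u\|\to\infty$ in $V$. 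If $(F_{-\infty})$ holds, the same estimates with reversed signs show that $\inf_{W}J>-\infty$ for $W:=V^{0}\oplus V^{+}$ and that $J(u)\to-\infty$ as $\|u\|\to\infty$ on the finite-dimensional $V:=V^{-}$. In either case one fixes $R>0$ so large that $\sup\{J(u):u\in V,\ \|u\|=R\}<\inf_{W}J$; since $\X(\Omega)=W\oplus V$ with $\dim V<\infty$ and $J\in C^{1}$ satisfies Palais--Smale, the Saddle Point Theorem (cf.\ \cite{Rabinowitz78}) yields a critical point of $J$, i.e.\ a weak solution $u\in\X(\Omega)$ of \eqref{eq:Problem}.

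The step I expect to be the main obstacle is the combination of indefiniteness and resonance: one is forced to carry the full three-space splitting $V^{-}\oplus V^{0}\oplus V^{+}$ (instead of the $V^{0}\oplus V^{+}$ decomposition that is enough for positive operators) through both the Palais--Smale verification and the linking geometry, and all the compactness of the problem has to funnel through the bound on the kernel component $u_{k}^{0}$, which is available only thanks to the Ahmad--Lazer--Paul condition $(F_{\pm\infty})$ and the boundedness of $f$. The remaining ingredients (the $C^{1}$ regularity of $J$, the estimates on $u_{k}^{\pm}$, and the linking inequality) are standard once the spectral decomposition is in place.
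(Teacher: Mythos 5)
Your proposal is correct and follows essentially the same route as the paper: the same spectral decomposition $H_k^-\oplus H_k^0\oplus \mathbb{P}_{k+1}$, the same use of the Ahmad--Lazer--Paul condition to bound the kernel component of a Palais--Smale sequence after controlling the other two components by testing with them, and the same two-case saddle-point geometry ($V=H_k^-\oplus H_k^0$ under $(F_{+\infty})$, $V=H_k^-$ under $(F_{-\infty})$) feeding into Rabinowitz's theorem. The one point you assert without argument --- the existence of $\delta>0$ with $Q\ge\delta\|\cdot\|_{\mathbb{X}(\Omega)}^2$ on your $V^{+}$ and $Q\le-\delta\|\cdot\|_{\mathbb{X}(\Omega)}^2$ on your $V^{-}$ --- is precisely the content of Lemmas \ref{lemma_claim} and \ref{lm:C1upper}, which in the indefinite regime $\alpha<-1/C_s$ cannot be obtained by the usual division by eigenvalues and require a separate (compactness) proof, so you should be aware that this is where the paper's actual technical work is hidden.
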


We immediately clarify that Theorem \ref{th:main}: it is somehow {\it easy to get} for $\alpha >0$ or when
\begin{equation*}
-\dfrac{1}{C_s}<\alpha<0\,,
\end{equation*}
where $C_s>0$ is the best constant of the continuous embedding $H^{1}_{0} \subset H^{s}$ (see e.g. \cite{DRV}), i.e.
\begin{equation*}
[u]_s^2 := \iint_{\mathbb{R}^{2n}}\dfrac{|u(x)-u(y)|^2}{|x-y|^{n+2s}}\, dx dy \leq C_s \, \int_{\Omega}|\nabla u|^2 \, dx\,.
\end{equation*}
In this perspective, the probably more interesting case is  when $\alpha<-\dfrac{1}{C_s}$ is assumed. 
Indeed, the situation becomes suddenly more delicate, mainly because the local--nonlocal operator is not more positive definite, {\it indefinite operators}. As a consequence, the bilinear form naturally associated to it does not induce a scalar product nor a norm, the variational spectrum may exhibit negative eigenvalues and even the maximum principles may fail, see e.g. \cite{BDVV}. 
As a consequence, some inequalities cannot be adapted to our situations, since dividing by eigenvalues may reverse the sign, nullifying a verbatim adaptation of \cite{Rabinowitz78}. For this reason, we need to establish some crucial estimates for eigenspaces, see Lemmas \ref{lemma_claim} and \ref{lm:C1upper}.

\medskip

We stress that operators of this latter form (e.g. with $\alpha=-1$) do not have only a purely theoretical mathematical interest, indeed the play a role in applied sciences like combustion theory. We limit ourselves to mention that the stationary part in the original model proposed by Sivashinsky \cite{Siva} to deal with the instability of the propagation front of flames can be reduced to operator of the form previously described, and this may happen under phisically motivated assumptions, see e.g. \cite{Ibdah} and the references therein.

\medskip

We stress that Theorem \ref{th:main} is actually a further generalization of a result by Ahmad, Lazer and Paul \cite{ALP} where the authors dealt with a local operator at resonance. Despite well known, we want to recall here that the original proof in \cite{ALP} is by no means of variational flavour, and consists in a delicate construction of the solution using a sort of Galerkin method. As a matter of fact, a striking and not foreseeable consequence of \cite{ALP} is an existence result with proof relying on  the Saddle Point Theorem by Rabinowitz \cite{Rabinowitz78}, which is one of the cornerstones of variational methods in nonlinear analysis. We will follow this latter approach, which has already been used also in the pure nonlocal case in \cite{FiSeVaZAMP}. We notice that, as in \cite{FiSeVaZAMP}, it is possible to work in a slightly more general case considering weighted Dirichlet eigenvalues, where the weight $a$ is a Lipschitz function. However, treating solutions of $\LL=\lambda a(x)u$ doesn't change the spirit of our result, and for this reason we concentrate on eigenvalues without weight.

\medskip

In the spirit of \cite{Rabinowitz78}, under assumption $(F_{-\infty})$, we can also prove a multiplicity result under a few extra assumptions. The precise statement is the following
\begin{theorem}\label{th.main2}
Let $f$ satisfy $(f_{bc})$ and $(F_{-\infty})$ and suppose that $\lambda  \in \rr$ is a variational Dirichlet eigenvalue of $\LL$. 
We further assume that $f(x,0)=0$ and that $f(x,t)$ is odd in the $t$ variable. Finally, we assume that
\begin{equation*}
\tag{$F_{pos}$} \textrm{ there exists } r>0 \textrm{ such that } F(x,t)>0, \quad \textrm{ for } 0<|t|<r \textrm{ and } x\in \overline{\Omega}.
\end{equation*}
Then, problem \eqref{eq:Problem} admits at least $\dim(H^{0}_\lambda)$ distinct pair of nontrivial weak solutions, where $H^{0}_\lambda$ denotes the eigenspace associated to $\lambda$.
\end{theorem}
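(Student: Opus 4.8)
The plan is to use a $\Z_2$-symmetric version of the Saddle Point geometry — more precisely, the Fountain/symmetric linking theorems recorded in \cite{Rabinowitz78} — applied to the truncated energy functional. Since $f$ is bounded, the natural functional
\[
J_\lambda(u) = \half\Big(\int_\Omega |\nabla u|^2\,dx + \alpha [u]_s^2\Big) - \frac{\lambda}{2}\int_\Omega u^2\,dx - \int_\Omega F(x,u)\,dx
\]
is well defined and $C^1$ on $\mathbb X(\Omega)$, and the oddness of $f$ together with $f(x,0)=0$ gives $J_\lambda$ even with $J_\lambda(0)=0$. First I would recall the spectral decomposition from Section \ref{sec.NotPrel}: write $\mathbb X(\Omega) = H^-_\lambda \oplus H^0_\lambda \oplus H^+_\lambda$, where $H^0_\lambda = \mathrm{Ker}(\LL-\lambda)$ is finite-dimensional, $H^-_\lambda$ is the (finite-dimensional) span of eigenfunctions with eigenvalue $< \lambda$, and $H^+_\lambda$ carries a positive-definite form with a spectral gap $\mu - \lambda > 0$ above $\lambda$. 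On $H^+_\lambda$ the quadratic part controls the norm, and since $F$ is bounded below (because $f$ is bounded, $|F(x,t)| \le C|t|$), $J_\lambda$ is bounded below and coercive on $H^+_\lambda$ — this gives one "wall." On $H^-_\lambda \oplus H^0_\lambda$ the quadratic part is $\le 0$ on $H^-_\lambda$ and vanishes on $H^0_\lambda$, so using $(F_{-\infty})$ exactly as in Theorem \ref{th:main} one shows $J_\lambda \to -\infty$ on $H^-_\lambda \oplus H^0_\lambda$; this is where Lemmas \ref{lemma_claim} and \ref{lm:C1upper} enter, to handle the indefinite case $\alpha < -1/C_s$ where one cannot simply divide by eigenvalues.

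Next I would verify the Palais–Smale condition. Since $f$ is bounded, a $(PS)_c$ sequence $(u_k)$ splits along the three subspaces; the $H^+_\lambda$ component is bounded by coercivity, the $H^-_\lambda$ component is bounded because the form is negative definite there, and boundedness of the $H^0_\lambda$ component follows from $(F_{-\infty})$ by the standard Ahmad–Lazer–Paul argument (if $\|u_k^0\| \to \infty$ one contradicts $J_\lambda(u_k)$ bounded below, using that on the finite-dimensional kernel all norms are equivalent and $F(x,\cdot)$ behaves like the genus-zero term). Compactness of the embedding $\mathbb X(\Omega) \hookrightarrow L^2(\Omega)$ and of $(-\Delta)^s$-type remainders then upgrades weak convergence to strong, giving $(PS)$.

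For the multiplicity count I would invoke the symmetric critical point theorem (the $\Z_2$ index / genus version of the Saddle Point Theorem) in the form: if $J_\lambda$ is even, satisfies $(PS)$, is bounded below on a closed subspace $Y$ of finite codimension $d$, and there is a compact symmetric set (e.g. a sphere $S_\rho \cap Z$ of a subspace $Z$ of dimension $m$) on which $J_\lambda < 0 = J_\lambda(0)$ with $m - d$ large enough, then $J_\lambda$ has at least $m - d$ pairs of nontrivial critical points. Here I take $Y = H^+_\lambda$ (codimension $d = \dim H^-_\lambda + \dim H^0_\lambda$) and I need a subspace $Z$ with $\dim Z - d \ge \dim H^0_\lambda$ on whose small sphere $J_\lambda$ is strictly negative. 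The point of hypothesis $(F_{pos})$ is precisely to produce this: on the finite-dimensional space $Z = H^-_\lambda \oplus H^0_\lambda \oplus (\text{one extra eigendirection})$ — or more cleanly on $Z= H^-_\lambda\oplus H^0_\lambda$ enlarged appropriately — for $\|u\|$ small the quadratic part is nonpositive on $H^-_\lambda\oplus H^0_\lambda$ and, since $0<|u(x)|<r$ on most of $\Omega$ when $\|u\|$ is small (using $L^\infty$-type bounds on finite-dimensional eigenspaces, cf. Lemma \ref{lm:C1upper}), the term $-\int_\Omega F(x,u) < 0$ strictly, so $J_\lambda(u) < 0$ on $S_\rho \cap Z$ for $\rho$ small. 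Counting dimensions, $\dim Z - \operatorname{codim} Y \ge \dim H^0_\lambda$, which yields the asserted $\dim(H^0_\lambda)$ pairs.

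The main obstacle I anticipate is the negativity-on-a-small-sphere step in the indefinite regime $\alpha<-1/C_s$: one must ensure that on the relevant finite-dimensional subspace the (possibly sign-indefinite) quadratic form does not overwhelm the favorable sign of $-\int_\Omega F$ coming from $(F_{pos})$ when $\rho$ is small, and simultaneously that one has a genuine $L^\infty$ control to guarantee $0<|u(x)|<r$ — this is exactly the role of Lemmas \ref{lemma_claim} and \ref{lm:C1upper}, and getting the dimension bookkeeping to land on $\dim H^0_\lambda$ (rather than something smaller) is the delicate point. A secondary technical nuisance is that $J_\lambda$ need not be anticoercive on all of $H^-_\lambda\oplus H^0_\lambda$ in the strong sense required by the cleanest abstract statement, so I would likely phrase the linking via the $\Z_2$-cohomological index of Fadell–Rabinowitz, or use the version of the symmetric Saddle Point Theorem that only requires $J_\lambda\to-\infty$ along a subspace together with boundedness below on a complementary subspace — both of which are available in \cite{Rabinowitz78}.
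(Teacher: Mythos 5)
There is a genuine gap, and it sits exactly where you yourself flag the ``delicate point'': the dimension bookkeeping. You choose $Y=H^+_\lambda=\mathbb{P}_{k+1}$ as the subspace on which $\mathcal{J}_\lambda$ is bounded below, so its codimension is $d=\dim H^-_\lambda+\dim H^0_\lambda$, and Theorem \ref{teosotto} then requires a symmetric set of genus $m\geq d+\dim H^0_\lambda=\dim H^-_\lambda+2\dim H^0_\lambda$ on which $\mathcal{J}_\lambda<0$. Your ``clean'' choice $Z=H^-_\lambda\oplus H^0_\lambda$ has genus only $d$, which yields $m-d=0$ pairs; your enlarged choice forces $Z$ to contain $\dim H^0_\lambda$ extra directions from $\mathbb{P}_{k+1}$, and there the small-sphere negativity fails in general: on $\mathbb{P}_{k+1}$ the quadratic part satisfies $\mathcal{B}_\alpha(u,u)-\lambda\|u\|^2_{L^2(\Omega)}\geq\beta\|u\|^2_{\mathbb{X}(\Omega)}$ by Lemma \ref{lemma_claim}, while $(F_{pos})$ carries no quantitative lower bound on $F$ near $t=0$ (take $F(x,t)=t^4$ near the origin: then $-\int_\Omega F\sim-\rho^4$ cannot beat $+\beta\rho^2$ for small $\rho$). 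So neither variant of your linking produces $\dim H^0_\lambda$ pairs.

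The paper resolves this by a different split of the roles of $H^0_\lambda$. The subspace on which the functional is bounded below is taken to be $\tilde E=H^0_k\oplus\mathbb{P}_{k+1}$, not $\mathbb{P}_{k+1}$ alone: this is precisely what Lemma \ref{lm:lowerbound-} (which uses $(F_{-\infty})$ to control the kernel component) provides, and it drops the codimension to $d=\dim H^-_k$. The negative set is then the small sphere $S$ in the finite-dimensional space $H_k=H^-_k\oplus H^0_k$, where the quadratic form is nonpositive by \eqref{eq:Stima2} (or Lemma \ref{lm:C1upper}), so $\mathcal{J}_{\lambda_k}(u)\leq-\int_\Omega F(x,u)\,dx<0$ for small radius by $(F_{pos})$ and the equivalence of the $L^\infty$ and $\mathbb{X}(\Omega)$ norms on $H_k$ --- no incursion into the positive cone is needed. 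Since $\gamma(S)=\dim H_k=\dim H^-_k+\dim H^0_k$, Theorem \ref{teosotto} gives exactly $\dim H_k-\dim H^-_k=\dim H^0_k$ pairs. The lesson is that $(F_{-\infty})$ must be used not only for anticoercivity on $H^-_k\oplus H^0_k$ but, more importantly here, to push the kernel into the ``bounded below'' subspace; your Palais--Smale discussion and the rest of the geometric analysis are otherwise in line with the paper.
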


\begin{remark}
We remark that, as in \cite{Rabinowitz78}, we are able to prove the multiplicity result only when $(F_{-\infty})$, since only in such a case we are able to prove a decomposition of the space $\X(\Omega)$ for which the abstract multiplicity theorem \ref{teosotto} holds. Hence, the multiplicity result when $(F_{+\infty})$ holds is an open problem.
\end{remark}
\medskip

We close the introduction with a quite short overview on the more recent (elliptic) PDEs oriented literature. Problems driven by operators of mixed type, even with a nonsingular nonlocal operator \cite{DPFR}, have raised a certain interest in the last few years, for example in connection with the study of optimal animal foraging strategies (see e.g. \cite{DV} and the references therein). From the pure mathematical point of view, the superposition of such operators generates a lack of scale invariance which may lead to unexpected complications.

At the present stage, and without aim of completeness, the investigations have taken into consideration interior regularity and maximum principles (see e.g. \cite{BDVV,CDV22,DeFMin,GarainKinnunen,GarainLindgren}), boundary Harnack principle \cite{CKSV}, boundary regularity and overdetermined problems \cite{BMS, SUPR}, existence of solutions (see e.g. \cite{BDVV5, BMV, BMV2, DPLV1, DPLV2, GarainUkhlov, MPL, SalortVecchi, ArRa, Garain}) and shape optimization problems \cite{BDVV2,BDVV3}.

\medskip

The paper is organized as follows. In Section \ref{sec.NotPrel} we introduce some preliminary definitions and results, such as the Hilbert space $ \mathbb{X}(\Omega)$, the notion of weak solution of \eqref{eq:Problem} (as critical point of the functional $\mathcal{J}_{\lambda}$), the variational eigenvalue problem for  $\LL$ and  the crucial main lemmas. Section \ref{sc:proofs} is dedicated to the  proofs of the Theorem \ref{th:main} and Theorem \ref{th.main2};  we first deal with the geometry of the functional $\mathcal{J}_{\lambda}$ and the Palais-Smale condition, then we verify the hypothesis of the Saddle Point Theorem and   \cite[Theorem 1.9]{Rabinowitz78}. In the Appendix we recall the notion of Krasnoselskii genus and we state \cite[Theorem 1.9]{Rabinowitz78}.

\section{Assumptions, notation and preliminary results} \label{sec.NotPrel}

Let $\Omega\subseteq\R^n$ be a connected and bounded open set with $C^1$-smooth boundary $\partial\Omega$.
We define the space of solutions of problem \eqref{eq:Problem} as
\begin{equation*}
\mathbb{X}(\Omega) := \big\{u\in H^{1}(\R^n):\,
\text{$u\equiv 0$ a.e.\,on $\R^n\setminus\Omega$}\big\}.
\end{equation*}
Thanks to the regularity assumption on $\de\Omega$ (see \cite[Proposition 9.18]{Brezis}), we can identify the space $\mathbb{X}(\Omega)$ with the space $H_0^{1}(\Omega)$ in the following sense:
\begin{equation} \label{eq.identifXWzero}
    u \in H_0^{1}(\Omega)\,\,\Longleftrightarrow\,\,
    u\cdot\mathbf{1}_{\Omega}\in\mathbb{X}(\Omega)\,,
\end{equation}
where $\mathbf{1}_\Omega$ is the indicator function of $\Omega$.
From now on, we shall always identify a function $u\in H_0^{1}(\Omega)$ with $\hat{u} := u\cdot\mathbf{1}_\Omega\in\mathbb{X}(\Omega)$.

By the Poincar\'e inequality and \eqref{eq.identifXWzero}, we get that the quantity
\[
\|u\|_{\mathbb{X}} :=\left( \int_{\Omega}|\nabla u|^2\, dx\right)^{1/2},\quad u\in\mathbb{X}(\Omega)\,,\]
endows $\mathbb{X}(\Omega)$ with a structure of (real) Hilbert space, which is isometric to $H_0^{1}(\Omega)$.
To fix the notation, we denote by $\langle \cdot,\cdot \rangle_{\mathbb{X}}$ the scalar product which induces the norm above on $\mathbb{X}(\Omega)$.
We briefly recall that the space $\mathbb{X}(\Omega)$ is separable and reflexive, $C_0^\infty(\Omega)$ is dense in $\mathbb{X}(\Omega)$ and eventually that $\mathbb{X}(\Omega)$ compactly embeds in $L^{p}(\Omega)$ for any $p\in\left[1, \frac{2n}{n-2}\right)$ and in  
\[
H^s_0(\Omega):=\left\{H^{s}(\R^n):\,\text{$u\equiv 0$ a.e.\,on $\R^n\setminus\Omega$}\right\}
\]
by \cite[Theorem 16.1]{lions}.

With the correct functional setting, we are ready to give the suitable notion of weak solution for problem \eqref{eq:Problem}.
\begin{definition}\label{eq:DefWeakSol}
A function $u \in \mathbb{X}(\Omega)$ is called a weak solution of \eqref{eq:Problem} if
\begin{equation*}
\begin{aligned}
    \int_{\Omega}\langle \nabla u, \nabla \varphi\rangle\, dx &+\alpha \iint_{\mathbb{R}^{2n}}\dfrac{(u(x)-u(y))(\varphi(x)-\varphi(y))}{|x-y|^{n+2s}}\, dxdy \\
    &= \lambda \int_{\Omega}u\varphi \, dx + \int_{\Omega}f(x,u)\varphi \, dx
    \end{aligned}
\end{equation*}
for every $\varphi \in \mathbb{X}(\Omega)$.
\end{definition}
As usual, weak solutions of \eqref{eq:Problem} can be found as critical points of the functional $\mathcal{J}_{\lambda}:\mathbb{X}(\Omega) \to \mathbb{R}$ defined as
\[
\mathcal{J}_{\lambda}(u) := \dfrac{1}{2}\int_{\Omega}|\nabla u|^2 \, dx + \dfrac{\alpha}{2}\iint_{\mathbb{R}^{2n}} \dfrac{|u(x)-u(y)|^2}{|x-y|^{n+2s}}\, dxdy - \dfrac{\lambda}{2} \int_{\Omega}|u|^2 \, dx - \int_{\Omega}F(x,u)\, dx,
\]
where
\[
F(x,t) := \int_{0}^{t}f(x,\sigma)\, d\sigma, \quad t \in \mathbb{R}.
\]
By assumption $(f_{bc})$ it is standard to prove (see for instance \cite{AmP}) that the functional $\mathcal{J}_{\lambda}$ is Fr\'{e}chet differentiable and that
\begin{align*}
\mathcal{J}_{\lambda}'(u)(\varphi) &= \int_{\Omega}\langle \nabla u, \nabla \varphi\rangle \, dx + \alpha \iint_{\mathbb{R}^{2n}}\dfrac{(u(x)-u(y))(\varphi(x)-\varphi(y))}{|x-y|^{n+2s}}\, dxdy \\
&- \lambda \int_{\Omega}u(x)\varphi(x) \, dx- \int_{\Omega}f(x,u(x))\varphi(x)\, dx \quad \textrm{for every }\varphi \in \mathbb{X}(\Omega)\,.
\end{align*}

Now, we consider the bilinear form 
$\mathcal{B}_{\alpha}:\mathbb{X}(\Omega)\times \mathbb{X}(\Omega) \to \mathbb{R}$, defined by
\begin{equation*}
\mathcal{B}_{\alpha}(u,v):= \int_{\Omega}\langle \nabla u, \nabla v\rangle\, dx +\alpha \iint_{\mathbb{R}^{2n}}\dfrac{(u(x)-u(y))(v(x)-v(y))}{|x-y|^{n+2s}}\, dxdy
\end{equation*}
for any $u,v \in \mathbb{X}(\Omega)$. In spite of the fact that $\alpha$ can be such that $\mathcal{B}_\alpha$ is not positive definite, we give the following definition.

\begin{definition}
We say that $u$ and $v$ are $\mathcal{B}_\alpha$-orthogonal if
\begin{equation*}
\mathcal{B}_{\alpha}(u,v)=0.
\end{equation*}
\end{definition}
The terminology adopted above is justified by the fact that, for $\alpha>0$ (more precisely if $\alpha>-\frac{1}{C_s}$), the bilinear form $\mathcal{B}_{\alpha}$ defines a true scalar product.

We conclude this section dealing with the eigenvalue problem associated to the operator $\LL$, that is the following boundary value problem
\begin{equation}\label{eq.EigenvalueProblem}
\left\{ \begin{array}{rl}
  \LL u = \lambda u, & \textrm{in } \Omega\,,\\
  u= 0, & \textrm{in } \mathbb{R}^n \setminus \Omega\,,
\end{array}\right.
\end{equation}
where $\lambda \in \mathbb{R}$.
According to Definition \ref{eq:DefWeakSol}, we give the following definition.
\begin{definition}
A number $\lambda \in \mathbb{R}$ is called a variational Dirichlet eigenvalue of $\LL$ if there exists a  nontrivial weak solution $u\in \mathbb{X}(\Omega)$ of \eqref{eq.EigenvalueProblem} or, equivalently, if
\begin{equation*}
\int_{\Omega}\langle \nabla u, \nabla \varphi\rangle\, dx +\alpha \iint_{\mathbb{R}^{2n}}\dfrac{(u(x)-u(y))(\varphi(x)-\varphi(y))}{|x-y|^{n+2s}}\, dxdy =\lambda \int_{\Omega}u\varphi \, dx
\end{equation*}
for every $\varphi \in \mathbb{X}(\Omega)$.
If such a function $u$ exists, we call it eigenfunction associated to the eigenvalue $\lambda$.
\end{definition}

Note that the linearity of $\LL$ guarantees a complete description of its eigenvalues, and relative eigenfunctions, according to the following result, see \cite[Proposition 2.4]{MMV}:
\begin{proposition}\label{prop.Eigenvalue}
Let $n>2$. Then the following statements hold true:
\begin{itemize}
\item[(a)] $\LL$ admits a divergent and bounded from below sequence of eigenvalues $\{\lambda_k\}_{k\in \mathbb{N}}$, i.e., there exists $C>0$ such that
\[
-C < \lambda_1 \leq \lambda_2  \leq \ldots \leq \lambda_k \to +\infty\,, \quad  \textrm{as } k \to +\infty.
\]
Moreover, for every $k\in \mathbb{N}$, $\lambda_{k}$ can be characterized as
\begin{equation}\label{eq:lambdaChar}
\lambda_{k} = \min_{\stackrel{u \in \mathbb{P}_{k}}{\|u\|_{L^{2}(\Omega)}=1}} \left\{\int_{\Omega}|\nabla u|^2 \, dx +\alpha \iint_{\mathbb{R}^{2n}}\dfrac{|u(x)-u(y)|^2}{|x-y|^{n+2s}}\, dxdy \right\},
\end{equation}
where 
\begin{equation*}
\mathbb{P}_{1} := \mathbb{X}(\Omega),
\end{equation*}
and, for every $k \geq 2$,
\[
\mathbb{P}_{k} := \left\{ u \in \mathbb{X}(\Omega): \mathcal{B}_{\alpha}(u, u_j)=0 \, \textrm{ for every } j = 1, \ldots,k-1\right\}; 
\]
\item[(b)] for every $k\in \mathbb{N}$ there exists an eigenfunction $u_{k} \in \mathbb{X}(\Omega)$ corresponding to $\lambda_{k}$, which realizes the minimum in \eqref{eq:lambdaChar};
\item[(c)] the sequence $\{u_k\}_{k\in \mathbb{N}}$ of eigenfunctions constitutes an orthonormal basis of $L^{2}(\Omega)$; moreover, the eigenfunctions are $\mathcal{B}_\alpha$-orthogonal.
\item[(d)] for every $k\in \mathbb{N}$, $\lambda_k$ has finite multiplicity.
\end{itemize}
\end{proposition}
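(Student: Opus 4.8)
The plan is to deduce the statement of Proposition~\ref{prop.Eigenvalue} from the spectral theorem for compact, self-adjoint, positive operators on $L^2(\Omega)$; the only non-routine point is that $\mathcal{B}_\alpha$ need not be positive definite (it is not when $\alpha\le -1/C_s$), which I would circumvent by a spectral shift. The first ingredient is the interpolation estimate
\[
[u]_s^2\le \eps\,\|u\|_{\mathbb{X}}^2+C_\eps\,\|u\|_{L^2(\Omega)}^2,\qquad \text{for every }\eps>0\text{ and }u\in\mathbb{X}(\Omega),
\]
which follows from the elementary inequality $t^{2s}\le \eps\,t^2+C_\eps$ for all $t\ge0$ (Young's inequality in the form $a^\theta b^{1-\theta}\le \theta a+(1-\theta)b$ with $\theta=s$), applied with $t=|\xi|$ and integrated against $|\widehat u(\xi)|^2$, recalling that $[u]_s^2$ and $\|u\|_{\mathbb{X}}^2$ coincide, up to normalizing constants, with $\int_{\R^n}|\xi|^{2s}|\widehat u(\xi)|^2\,d\xi$ and $\int_{\R^n}|\xi|^2|\widehat u(\xi)|^2\,d\xi$. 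Choosing $\eps=\tfrac1{2(|\alpha|+1)}$ and setting $M:=|\alpha|\,C_\eps$, one gets $\mathcal{B}_\alpha(u,u)+M\|u\|_{L^2(\Omega)}^2\ge\tfrac12\|u\|_{\mathbb{X}}^2$; combined with the continuity bounds $|\mathcal{B}_\alpha(u,v)|\le(1+|\alpha|C_s)\|u\|_{\mathbb{X}}\|v\|_{\mathbb{X}}$ (Cauchy--Schwarz together with $[u]_s^2\le C_s\|u\|_{\mathbb{X}}^2$) and $|\langle u,v\rangle_{L^2(\Omega)}|\lesssim\|u\|_{\mathbb{X}}\|v\|_{\mathbb{X}}$ (Poincar\'e), the symmetric bilinear form
\[
\mathcal{B}^M_\alpha(u,v):=\mathcal{B}_\alpha(u,v)+M\langle u,v\rangle_{L^2(\Omega)}
\]
is continuous and coercive on $\mathbb{X}(\Omega)$, hence an equivalent scalar product.

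Next I would introduce the solution operator $T\colon L^2(\Omega)\to L^2(\Omega)$ sending $g$ to the unique $Tg\in\mathbb{X}(\Omega)$ with $\mathcal{B}^M_\alpha(Tg,\varphi)=\langle g,\varphi\rangle_{L^2(\Omega)}$ for all $\varphi\in\mathbb{X}(\Omega)$ (well defined by Lax--Milgram applied to $\mathcal{B}^M_\alpha$). By symmetry of $\mathcal{B}^M_\alpha$, $T$ is self-adjoint; by coercivity, $\langle Tg,g\rangle_{L^2(\Omega)}=\mathcal{B}^M_\alpha(Tg,Tg)\ge\tfrac12\|Tg\|_{\mathbb{X}}^2>0$ for $g\ne0$, so $T$ is positive and injective; and $T$ is compact, being the composition of the bounded operator $g\mapsto Tg$ from $L^2(\Omega)$ into $\mathbb{X}(\Omega)$ with the compact embedding $\mathbb{X}(\Omega)\hookrightarrow\hookrightarrow L^2(\Omega)$ recalled in this section. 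The spectral theorem then provides an $L^2(\Omega)$-orthonormal basis $\{u_k\}_{k\in\N}$ of eigenfunctions of $T$ with eigenvalues $\mu_1\ge\mu_2\ge\cdots>0$, $\mu_k\to0$, each of finite multiplicity. Setting $\lambda_k:=\mu_k^{-1}-M$ and unwinding the definition of $T$, each $u_k\in\mathbb{X}(\Omega)$ solves $\mathcal{B}_\alpha(u_k,\varphi)=\lambda_k\langle u_k,\varphi\rangle_{L^2(\Omega)}$ for all $\varphi\in\mathbb{X}(\Omega)$. This yields at once: the non-decreasing divergent sequence in (a), with lower bound $\lambda_1=\mathcal{B}_\alpha(u_1,u_1)\ge\tfrac12\|u_1\|_{\mathbb{X}}^2-M>-M=:-C$ (using $\|u_1\|_{L^2(\Omega)}=1$); the orthonormal basis in (c), with $\mathcal{B}_\alpha(u_j,u_k)=\lambda_j\langle u_j,u_k\rangle_{L^2(\Omega)}=0$ for $j\ne k$; and the finite multiplicity in (d).

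Finally, for the variational characterization \eqref{eq:lambdaChar} and for (b) I would use the Courant--Fischer inductive principle for the genuine inner product $\mathcal{B}^M_\alpha$: one has $\mu_k^{-1}=\min\{\mathcal{B}^M_\alpha(u,u):\|u\|_{L^2(\Omega)}=1,\ \langle u,u_j\rangle_{L^2(\Omega)}=0\text{ for }j<k\}$, the minimum being attained at $u_k$; subtracting $M$ and using $\|u\|_{L^2(\Omega)}=1$ gives the stated formula for $\lambda_k$, once one notes that, by the eigenvalue relation $\mathcal{B}_\alpha(u,u_j)=\lambda_j\langle u,u_j\rangle_{L^2(\Omega)}$, the orthogonality conditions defining the admissible set coincide with membership in $\mathbb{P}_k$, and that $u_k\in\mathbb{P}_k$ realizes the minimum. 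Equivalently, \eqref{eq:lambdaChar} can be established directly and inductively: minimizing sequences for its right-hand side are bounded in $\mathbb{X}(\Omega)$ by the coercivity estimate above; the unit $L^2(\Omega)$-sphere is weakly closed thanks to the compact embedding and $\mathbb{P}_k$ is weakly closed as an intersection of kernels of continuous functionals; and $u\mapsto\|u\|_{\mathbb{X}}^2+\alpha[u]_s^2$ is weakly lower semicontinuous on $\mathbb{X}(\Omega)$ because $[\cdot]_s^2$ is weakly continuous there, $\mathbb{X}(\Omega)\hookrightarrow\hookrightarrow H^s_0(\Omega)$ being compact; so a minimizer exists, and its Euler--Lagrange equation identifies it as an eigenfunction with the claimed Rayleigh value. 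As anticipated, the single step demanding care is the production of the shift $M$, i.e.\ the interpolation inequality absorbing the possibly negative nonlocal term irrespective of the sign of $\alpha$; after that, the argument is the classical one, and for the remaining bookkeeping we refer to \cite[Proposition 2.4]{MMV}.
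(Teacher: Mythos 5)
The paper does not prove this proposition itself; it is imported from \cite[Proposition 2.4]{MMV}, and your reconstruction follows what is in substance the intended argument. The genuinely non-routine point is exactly the one you isolate: the G{\aa}rding-type bound $\mathcal{B}_\alpha(u,u)+M\|u\|_{L^2(\Omega)}^2\geq\tfrac12\|u\|_{\mathbb{X}(\Omega)}^2$, obtained from the interpolation inequality $[u]_s^2\leq\varepsilon\|u\|_{\mathbb{X}(\Omega)}^2+C_\varepsilon\|u\|_{L^2(\Omega)}^2$ (your Fourier-side derivation is fine, up to the normalizing constant between the Gagliardo seminorm and $\int|\xi|^{2s}|\widehat u(\xi)|^2\,d\xi$, which you acknowledge; one could equally deduce it by an Ehrling-type compactness argument from $\mathbb{X}(\Omega)\hookrightarrow\hookrightarrow H^s_0(\Omega)$). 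After that, both of your routes --- the resolvent operator combined with the spectral theorem for compact self-adjoint operators, and the direct inductive minimization of the shifted Rayleigh quotient, the latter being closer to what \cite{MMV} actually writes --- are classical, correct, and deliver (a), (b), (c), (d). The one step you pass over too quickly is the identification of the admissible set in \eqref{eq:lambdaChar} with $\mathbb{P}_k$: the relation $\mathcal{B}_\alpha(u,u_j)=\lambda_j\langle u,u_j\rangle_{L^2(\Omega)}$ shows that $L^2$-orthogonality to $u_j$ implies $\mathcal{B}_\alpha$-orthogonality, but the converse needs $\lambda_j\neq0$. If $0$ were an eigenvalue, say $\lambda_j=0<\lambda_k$ with $j<k$, then $u_j$ itself belongs to $\mathbb{P}_k$, has unit $L^2$-norm, and makes the Rayleigh quotient equal to $0<\lambda_k$, so \eqref{eq:lambdaChar} as literally written would fail. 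This is a degeneracy of the statement (present already in the source, where it is likewise not excluded) rather than a flaw in your scheme: the characterization you prove unconditionally is the one with $L^2$-orthogonality constraints, and the passage to $\mathbb{P}_k$ should be flagged as resting on the tacit assumption that $0\notin\{\lambda_1,\dots,\lambda_{k-1}\}$.
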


\begin{remark}
Clearly, if $\alpha > -\dfrac{1}{C_s}$ there is an improvement on the lower bound of $\lambda_1$, which is thus strictly positive. Moreover, in this case, $\lambda_1$ is also simple.
\end{remark}

We denote by $H_k$ the linear subspace of $\mathbb{X}(\Omega)$ generated by the first $k$ eigenfunctions of $\mathcal{L}_{\alpha}$, i.e.
\[
H_k=\textrm{span}_{\R} \{u_1,\ldots,u_k\}.
\]
Notice that $\mathbb{P}_{k+1}=(H_k)^{\perp_{\mathcal{B}_\alpha}}$, namely the subspace $\mathcal{B}_{\alpha}$-orthogonal to $H_k$.
Also we set
\[
H_k^{0}=\textrm{span}_{\R} \{u_j \ : \  \lambda_j=\lambda_k\},
\]
i.e. the kernel of $\LL-\lambda_k$, and
\[
H_k^{-}=\textrm{span}_{\R} \{u_j \ : \  \lambda_j<\lambda_k\}.
\]
By Proposition \ref{prop.Eigenvalue} (a) we can infer the existence of a positive integer $N_0 \in \mathbb{N}$ such that $\lambda_{N_0}$ is the first (not necessarily simple) positive eigenvalue. Of course, $\lambda_k >0$ for every $k > N_0$.\\
We further notice that, again by Proposition \ref{prop.Eigenvalue},
\begin{equation}\label{eq:Stima1}
\lambda_{k+1} \int_{\Omega}u^2 \, dx \leq \int_{\Omega}|\nabla u|^2 \, dx +\alpha \iint_{\mathbb{R}^{2n}}\dfrac{|u(x)-u(y)|^2}{|x-y|^{n+2s}}\, dxdy
\end{equation}
for every $u \in \textrm{span}(u_1, \ldots, u_k)^{\perp} = \mathbb{P}_{k+1}$ and
\begin{equation}\label{eq:Stima2}
\int_{\Omega}|\nabla u|^2 \, dx +\alpha \iint_{\mathbb{R}^{2n}}\dfrac{|u(x)-u(y)|^2}{|x-y|^{n+2s}}\, dxdy \leq \lambda_k \int_{\Omega}u^2 \, dx
\end{equation}
for every $u \in H_k$.

We first need the following preliminary result inspired by Rabinowitz \cite{Rabinowitz78}, see \cite[Lemma 4.1]{MMV} for a proof.

\begin{lemma}\label{lemma_claim}
Let $k\in \N$ be such that
\[
\lambda_1\leq \lambda_2\leq \ldots \leq \lambda_{k-1}\leq \lambda_k<\lambda_{k+1}\leq\ldots
\]
and decompose the space $\mathbb{X}(\Omega)$ as $\mathbb{X}(\Omega)=H_k \oplus \mathbb{P}_{k+1}$, where  $H_k :=\textrm{span}(u_1, \ldots, u_{k})$.
Then, there exists a positive constant $\beta$ such that for any $u\in \mathbb{P}_{k+1}$
\begin{equation}\label{claim}
    \mathcal{B}_\alpha(u,u)-\lambda_{k}\|u\|_{L^2(\Omega)}^2\geq\beta\|u\|_{\mathbb{X}(\Omega)}^2,
\end{equation}
or, equivalently,
\[
\inf_{u\in \mathbb{P}_{k+1} \setminus\{0\}}\left\{1+\frac{\alpha[u]_s^2-\lambda_{k}\|u\|_{L^2(\Omega)}^2}{\|u\|_{\mathbb{X}(\Omega)}^2}\right\}>0.
\]
\end{lemma}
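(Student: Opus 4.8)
The plan is to establish \eqref{claim} by a compactness/contradiction argument, exploiting the spectral characterization \eqref{eq:Stima1}. First I would observe that the second (``equivalently'') formulation makes the homogeneity explicit: since $\mathcal{B}_\alpha(u,u) = \|u\|_{\X(\Omega)}^2 + \alpha[u]_s^2$, inequality \eqref{claim} is homogeneous of degree two, so it suffices to prove
\[
\inf\left\{ \mathcal{B}_\alpha(u,u) - \lambda_k\|u\|_{L^2(\Omega)}^2 \ : \ u \in \mathbb{P}_{k+1},\ \|u\|_{\X(\Omega)} = 1\right\} =: \beta > 0.
\]
From \eqref{eq:Stima1} with the index shifted (applied to $u \in \mathbb{P}_{k+1}$, noting $\lambda_k < \lambda_{k+1}$) we already know the infimum is $\geq 0$; the whole point is to rule out that it equals $0$.

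Suppose for contradiction that $\beta = 0$. Then there is a sequence $\{u_j\} \subset \mathbb{P}_{k+1}$ with $\|u_j\|_{\X(\Omega)} = 1$ and $\mathcal{B}_\alpha(u_j,u_j) - \lambda_k\|u_j\|_{L^2(\Omega)}^2 \to 0$. By reflexivity of $\X(\Omega)$, up to a subsequence $u_j \rightharpoonup u$ weakly in $\X(\Omega)$, and by the compact embeddings recalled after \eqref{eq.identifXWzero} we get $u_j \to u$ strongly in $L^2(\Omega)$ and strongly in $H^s_0(\Omega)$, so $[u_j]_s \to [u]_s$ and $\|u_j\|_{L^2(\Omega)} \to \|u\|_{L^2(\Omega)}$. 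The subspace $\mathbb{P}_{k+1}$ is $\mathcal{B}_\alpha$-closed and weakly closed (it is the intersection of kernels of the bounded linear functionals $v \mapsto \mathcal{B}_\alpha(v,u_i)$), hence $u \in \mathbb{P}_{k+1}$. Now split $\mathcal{B}_\alpha(u_j,u_j) = 1 + \alpha[u_j]_s^2$: the nonlocal part converges, so $\liminf_j \mathcal{B}_\alpha(u_j,u_j) = 1 + \alpha[u]_s^2$ if $\alpha \geq 0$ by weak lower semicontinuity of $\|\cdot\|_{\X}$, while if $\alpha < 0$ we use $[u_j]_s^2 \to [u]_s^2$ together with $\liminf \|u_j\|_{\X}^2 \geq \|u\|_{\X}^2$ to get the same lower bound $1 + \alpha[u]_s^2 \geq \|u\|_{\X(\Omega)}^2 + \alpha[u]_s^2 = \mathcal{B}_\alpha(u,u)$. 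Passing to the liminf in the defining relation yields
\[
0 = \lim_j\left(\mathcal{B}_\alpha(u_j,u_j) - \lambda_k\|u_j\|_{L^2(\Omega)}^2\right) \geq \mathcal{B}_\alpha(u,u) - \lambda_k\|u\|_{L^2(\Omega)}^2 \geq 0,
\]
the last inequality again by \eqref{eq:Stima1}. Hence $\mathcal{B}_\alpha(u,u) = \lambda_k\|u\|_{L^2(\Omega)}^2$ and all inequalities are equalities; in particular $\|u\|_{\X(\Omega)}^2 = \lim_j\|u_j\|_{\X(\Omega)}^2 = 1$, so $u \neq 0$ and the weak convergence $u_j \rightharpoonup u$ is actually strong in $\X(\Omega)$.

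It remains to derive a contradiction from the existence of a nonzero $u \in \mathbb{P}_{k+1}$ realizing equality in \eqref{eq:Stima1}. The minimum in the Rayleigh-type characterization \eqref{eq:lambdaChar} over $\mathbb{P}_{k+1}$ equals $\lambda_{k+1}$, so $\mathcal{B}_\alpha(u,u) \geq \lambda_{k+1}\|u\|_{L^2(\Omega)}^2$; combined with $\mathcal{B}_\alpha(u,u) = \lambda_k\|u\|_{L^2(\Omega)}^2$ and $\|u\|_{L^2(\Omega)}^2 > 0$ (which holds since $u \neq 0$ and $\|\cdot\|_{L^2}$ separates points of $\X(\Omega)$... more directly, if $\|u\|_{L^2(\Omega)} = 0$ then $u = 0$), this forces $\lambda_k \geq \lambda_{k+1}$, contradicting the strict inequality $\lambda_k < \lambda_{k+1}$ in the hypothesis. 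Therefore $\beta > 0$, which is \eqref{claim}. The main obstacle, and the only place where the sign of $\alpha$ genuinely intervenes, is the lower-semicontinuity step for $\mathcal{B}_\alpha$ when $\alpha < 0$: there $\mathcal{B}_\alpha$ is not a norm and one cannot simply invoke weak lower semicontinuity of $\mathcal{B}_\alpha(\cdot,\cdot)$ itself, but must instead separate the local part (weakly l.s.c.) from the nonlocal part (continuous under the compact embedding into $H^s_0(\Omega)$) — this is precisely the kind of care the introduction warns about, and it is handled cleanly by the decomposition above.
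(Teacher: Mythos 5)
Your argument is correct: the normalization to the unit sphere, the contradiction via a weakly convergent sequence, the splitting of $\mathcal{B}_\alpha$ into the weakly lower semicontinuous local part and the compactly convergent fractional part, and the final clash with the strict gap $\lambda_k<\lambda_{k+1}$ through \eqref{eq:Stima1} all hold up, and you correctly avoid dividing by possibly negative eigenvalues. This is essentially the same contradiction-plus-compactness scheme as the proof the paper invokes from \cite[Lemma 4.1]{MMV} (and mirrors the in-paper proof of Lemma \ref{lm:C1upper}), so no further comparison is needed.
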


We now prove a {\it sort of counterpart} of Lemma \ref{lemma_claim} when we restrict our attention to the finite dimensional space $H_k$. The former and the next lemma will be two of the crucial ingredients to verify that the functional $\mathcal{J}_{\lambda}$ verifies the saddle point geometry. 

\begin{lemma}\label{lm:C1upper}
Let $k\in \mathbb{N}$ be such that $\lambda = \lambda_k < \lambda_{k+1}$. Then  there exists a positive constant $\gamma >0$, such that 
\begin{equation}
\label{eq:upperC_1}
\mathcal{B}_{\alpha}(u,u)- \lambda_{k} \|u\|^2_{L^2(\Omega)} \leq - \gamma \|u^-\|_{\mathbb{X}(\Omega)}^2
\end{equation}
for each $u \in H_k$,  where $u=u^0+u^-$, $u^0 \in H_k^0$ and $u^- \in H_k^-$.
\begin{proof}
If $u \equiv 0$, then the assertion is trivial. Hence we assume $u \in H_k \smallsetminus \{0\}$. Thanks to Proposition \ref{prop.Eigenvalue}\, (c), a simple computation yields that
\[
\mathcal{B}_{\alpha}(u,u) - \lambda_{k} \|u\|^2_{L^2(\Omega)} = \mathcal{B}_{\alpha}(u^-,u^-) - \lambda_{k} \|u^-\|^2_{L^2(\Omega)},
\]
which is nonpositive by \eqref{eq:Stima2}. Then it suffices to prove that there exists a positive constant $\gamma>0$ such that 
\begin{equation}
\sup_{u^-\in H_k^- \setminus \{0\}}\left\{1+ \dfrac{\alpha [u^-]^2 -\lambda_k \|u^-\|^2_{L^2(\Omega)}}{\|u^-\|_{\mathbb{X}(\Omega)}^2}\right\} = -\gamma.
\end{equation}
To this aim, we argue as in \cite[Lemma 4.1]{MMV} assuming by contradiction that there exists a sequence $\{u^-_n\}_{n\in \mathbb{N}} \in H_k^- \setminus \{0\}$ such that 
\begin{equation}
1+ \dfrac{\alpha [u_n^-]^2 -\lambda_k \|u_n^-\|^2_{L^2(\Omega)}}{\|u_n^-\|_{\mathbb{X}(\Omega)}^2} \to 0, \quad \textrm{ as } n  \to +\infty.
\end{equation}
We then consider the normalized (in $\mathbb{X}(\Omega)$) sequence $$v_n^- := \dfrac{u_n^-}{\|u_n^-\|_{\mathbb{X}(\Omega)}} \in H^-_k \setminus \{0\},$$
and, since $H^-_k \setminus \{0\}$ is finite dimensional, we can infer the existence of a function $v^- \in H^-_k$ with $\|v^-\|_{\X(\Omega)}=1$ and such that $v_n^- \to v^-$. Therefore,  by the compact embedding of $\X(\Omega)$ in $H^s_0(\Omega)$ and in $L^2(\Omega)$, we find
\begin{equation*}
1+ \alpha [v_n^-]^2 - \lambda_k \|v_n^-\|_{L^2(\Omega)}^2 \to 1+ \alpha [v^-]^2 - \lambda_k \|v^-\|_{L^2(\Omega)}^2, \quad \textrm{as } n \to +\infty,
\end{equation*}
but at the same time, 
\begin{equation*}
1+ \alpha [v_n^-]^2 - \lambda_k \|v_n^-\|_{L^2(\Omega)}^2 \to 0, \quad \textrm{as } n \to +\infty.
\end{equation*}
Since $v^- \in H_k^- \setminus \{0\}$, we also have that
\begin{equation*}
0 > \mathcal{B}_{\alpha}(v^-,v^-) - \lambda_k \|v^-\|^2_{L^2(\Omega)} = \|v^-\|^2_{\mathbb{X}(\Omega)} -1 = 0,
\end{equation*}
and a contradiction arises.
\end{proof}
\end{lemma}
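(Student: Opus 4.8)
The plan is to first discard the component $u^0$, which contributes nothing to the left-hand side, and then to use the finite dimensionality of $H_k^-$ to upgrade a pointwise strict inequality into a uniform one.

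First I would reduce to $H_k^-$. By Proposition \ref{prop.Eigenvalue}(c) the eigenfunctions are $L^2(\Omega)$-orthonormal and $\mathcal{B}_\alpha$-orthogonal, and testing the weak eigenvalue equation for $u_i$ against $u_j$ gives $\mathcal{B}_\alpha(u_i,u_j)=\lambda_i\delta_{ij}$. Writing $u=u^0+u^-$ with $u^0\in H_k^0$ and $u^-\in H_k^-$, the cross terms $\mathcal{B}_\alpha(u^0,u^-)$ and $\int_\Omega u^0u^-\,dx$ vanish (the eigenvalues involved are distinct), while $\mathcal{B}_\alpha(u^0,u^0)=\lambda_k\|u^0\|_{L^2(\Omega)}^2$ since every eigenfunction spanning $H_k^0$ has eigenvalue $\lambda_k$. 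Hence
\[
\mathcal{B}_\alpha(u,u)-\lambda_k\|u\|_{L^2(\Omega)}^2=\mathcal{B}_\alpha(u^-,u^-)-\lambda_k\|u^-\|_{L^2(\Omega)}^2,
\]
and it suffices to produce $\gamma>0$ with $\mathcal{B}_\alpha(w,w)-\lambda_k\|w\|_{L^2(\Omega)}^2\le-\gamma\|w\|_{\mathbb{X}(\Omega)}^2$ for every $w\in H_k^-$.

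Next I would introduce, for $w\in H_k^-\setminus\{0\}$, the quotient
\[
Q(w):=\frac{\mathcal{B}_\alpha(w,w)-\lambda_k\|w\|_{L^2(\Omega)}^2}{\|w\|_{\mathbb{X}(\Omega)}^2}=1+\frac{\alpha[w]_s^2-\lambda_k\|w\|_{L^2(\Omega)}^2}{\|w\|_{\mathbb{X}(\Omega)}^2},
\]
which is continuous and $0$-homogeneous, so $\sup_{w\in H_k^-\setminus\{0\}}Q(w)$ equals the supremum of $Q$ over the sphere $S:=\{w\in H_k^-:\|w\|_{\mathbb{X}(\Omega)}=1\}$. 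Since $H_k^-$ is finite dimensional (Proposition \ref{prop.Eigenvalue}(a),(d)), $S$ is compact and the supremum is attained at some $v^-\in S$. Expanding $v^-=\sum_{\lambda_j<\lambda_k}c_ju_j$ with not all $c_j$ zero and using again $\mathcal{B}_\alpha(u_i,u_j)=\lambda_i\delta_{ij}$ together with $L^2$-orthonormality,
\[
Q(v^-)=\mathcal{B}_\alpha(v^-,v^-)-\lambda_k\|v^-\|_{L^2(\Omega)}^2=\sum_{\lambda_j<\lambda_k}c_j^2(\lambda_j-\lambda_k)<0,
\]
because $\lambda_j-\lambda_k<0$ on $H_k^-$. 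Setting $\gamma:=-Q(v^-)=-\sup_S Q>0$ gives $Q(w)\le-\gamma$ for all $w\in H_k^-\setminus\{0\}$, which is exactly \eqref{eq:upperC_1} restricted to $H_k^-$, hence in general by the reduction above.

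The only genuinely delicate point is the passage from the pointwise strict inequality $Q(w)<0$ — immediate from the eigenvalue decomposition (equivalently, a strict form of \eqref{eq:Stima2} on nonzero elements of $H_k^-$) — to the uniform gap $-\gamma$; this is precisely where the finite dimensionality of $H_k^-$ is indispensable, since over an infinite span of eigenfunctions the supremum could be $0$. If one prefers to mirror the structure of \cite[Lemma 4.1]{MMV}, the same conclusion is reached by contradiction, taking a normalized sequence $v_n^-\in H_k^-$ with $Q(v_n^-)\to0$, extracting a limit $v^-$ (possible by finite dimensionality, with convergence also in $H^s_0(\Omega)$ and $L^2(\Omega)$ by the compact embeddings), and deriving the absurdity $0>\mathcal{B}_\alpha(v^-,v^-)-\lambda_k\|v^-\|_{L^2(\Omega)}^2=\|v^-\|_{\mathbb{X}(\Omega)}^2-1=0$; the two routes are equivalent.
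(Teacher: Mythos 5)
Your proof is correct and follows essentially the same route as the paper: reduce to $H_k^-$ via the $L^2$- and $\mathcal{B}_\alpha$-orthogonality of eigenfunctions, then use the finite dimensionality of $H_k^-$ to turn the strict pointwise negativity of $\mathcal{B}_\alpha(w,w)-\lambda_k\|w\|_{L^2(\Omega)}^2$ into a uniform gap $-\gamma\|w\|_{\mathbb{X}(\Omega)}^2$. Your direct version (attaining the supremum on the compact unit sphere and evaluating it via the eigenfunction expansion) is a slightly cleaner packaging of the paper's sequential contradiction argument, which you also reproduce; the two are indeed equivalent.
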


The next Lemma is taken verbatim from \cite{FiSeVaZAMP}.
\begin{lemma}
\label{lm:FboundM}
Let $f$ satisfy $(f_{bc})$. Then there exists a positive constant $\tilde{M}>0$, depending on $\Omega$, such that 
\begin{equation}
\label{eq:estF}
\left | \int_{\Omega} F(x,u(x)) dx \right |\le \tilde{M} \ \|u\|_{\mathbb{X}(\Omega)}
\end{equation}
for all $u \in \mathbb{X}(\Omega)$.
\begin{proof}
By definition of $F$ we have 
\[
\left | \int_{\Omega} F(x,u(x)) dx \right | = \left| \int_{\Omega} \int_0^{u(x)} f(x,t) \, dt \, dx \right| \le M \int_{\Omega} |u(x)| dx 
\]
By the H\"older and Poincar\'e inequalities, we obtain 
\[
M \int_{\Omega} |u(x)| dx \le M |\Omega|^{\frac{1}{2}} \|u \|_{L^2(\Omega)} \le \tilde{M} \|u \|_{\mathbb{X}(\Omega)}.
\]
Hence we get \eqref{eq:estF}, with $\tilde{M}$ depending on $\Omega$.
\end{proof}
\end{lemma}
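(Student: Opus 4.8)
The plan is to reduce the integral estimate on $F$ to an $L^1$ bound on $u$, and then climb up to the $\mathbb{X}(\Omega)$-norm via Hölder and Poincaré. First I would extract from $(f_{bc})$ the uniform bound that is built into the terminology: since $f$ is a \emph{bounded} Carathéodory function, there is a constant $M>0$ with $|f(x,t)|\le M$ for a.e.\ $x\in\Omega$ and every $t\in\R$. This is really the only place where a tiny bit of care is needed, in reading ``bounded'' as a genuine $L^\infty$ bound on $f$ uniform in both variables; everything after that is routine.

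Next I would estimate the primitive pointwise. For a.e.\ $x$ and every $t\in\R$,
\[
|F(x,t)| = \left|\int_0^t f(x,\sigma)\,d\sigma\right| \le M\,|t|,
\]
so substituting $t=u(x)$ and integrating over $\Omega$ yields
\[
\left|\int_\Omega F(x,u(x))\,dx\right| \le M\int_\Omega |u(x)|\,dx = M\,\|u\|_{L^1(\Omega)}.
\]

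Finally I would pass from $L^1$ to the space norm in two standard steps: Hölder's inequality (equivalently Cauchy--Schwarz) on the bounded domain gives $\|u\|_{L^1(\Omega)}\le |\Omega|^{1/2}\,\|u\|_{L^2(\Omega)}$, and the Poincaré inequality on $\mathbb{X}(\Omega)\cong H^1_0(\Omega)$ — the very inequality already invoked to make $\|\cdot\|_{\mathbb{X}}$ a norm — gives $\|u\|_{L^2(\Omega)}\le C\,\|\nabla u\|_{L^2(\Omega)} = C\,\|u\|_{\mathbb{X}(\Omega)}$. Composing these with the factor $M$ produces \eqref{eq:estF} with $\tilde M := M\,|\Omega|^{1/2}\,C$, which depends only on $\Omega$ (through $|\Omega|$ and the Poincaré constant) and on the bound $M$ of $f$. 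I do not anticipate any genuine obstacle; the whole argument is the chain $F\rightsquigarrow L^1\rightsquigarrow L^2\rightsquigarrow\|\cdot\|_{\mathbb{X}(\Omega)}$.
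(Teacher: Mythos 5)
Your proposal is correct and follows exactly the same route as the paper's proof: the pointwise bound $|F(x,t)|\le M|t|$ from the boundedness of $f$, then the chain $L^1\to L^2$ via H\"older on the bounded domain and $L^2\to\|\cdot\|_{\mathbb{X}(\Omega)}$ via Poincar\'e. No gaps; nothing further to add.
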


\section{Proof of Theorem \ref{th:main} and Theorem \ref{th.main2}}
\label{sc:proofs}
The proof of Theorem \ref{th:main} follows the classical streamlines in minimax theory. In particular, and as already mentioned, we will make use of the Saddle Point Theorem by Rabinowitz (see \cite{Rabinowitz78,Rabinowitz86}), and therefore we have to check that its assumptions are satisfied. 

\subsection{Geometry of the functional $\mathcal{J}_{\lambda}$}
 
\begin{lemma}
\label{lm:lowerbound-}
Let $f$ satisfy $(f_{bc})$ and $(F_{-\infty})$ and let $k\in \mathbb{N}$ be such that $ \lambda_k < \lambda_{k+1}$. For every $K>0$, there exists $r=r(K)>0$ such that $\mathcal{J}_{\lambda}(u)\geq K$ for every $u \in \mathbb{P}_{k+1}\oplus H^0_{k}$ with $\|u\|_{\mathbb{X}(\Omega)}\geq r$.
\begin{proof}
Since $u \in \mathbb{P}_{k+1}\oplus H^0_{k}$, we can write $u = u^+ + u^0$, where $u^+ \in \mathbb{P}_{k+1}$ and $u^0 \in H^0_k$.
It now suffices to note that,
\begin{equation}
\begin{aligned}
\mathcal{J}_{\lambda_k}(u) &= \dfrac{1}{2}\mathcal{B}_{\alpha}(u,u) - \dfrac{\lambda_k}{2}\|u\|_{L^2(\Omega)}^2 - \int_{\Omega}F(x,u)\, dx \\
&= \dfrac{1}{2}\mathcal{B}_{\alpha}(u^+,u^+) - \dfrac{\lambda_k}{2}\|u^+\|_{L^2(\Omega)}^2 - \int_{\Omega}F(x,u)\, dx \\
&\geq \dfrac{\beta}{2}\|u^+\|_{\mathbb{X}(\Omega)}^2 - \int_{\Omega}F(x,u)\, dx \quad \mbox{(by Lemma \ref{lemma_claim})}\\
&=\dfrac{\beta}{2}\|u^+\|_{\mathbb{X}(\Omega)}^2 - \int_{\Omega}F(x,u^0)\, dx - \int_{\Omega}\left(F(x,u)-F(x,u^0)\right)\, dx \\
&=\dfrac{\beta}{2}\|u^+\|_{\mathbb{X}(\Omega)}^2 - \int_{\Omega}F(x,u^0)\, dx - \int_{\Omega}\int_{u^0(x)}^{u(x)}f(x,t)dt dx \\
&\geq  \dfrac{\beta}{2}\|u^+\|_{\mathbb{X}(\Omega)}^2 - \int_{\Omega}F(x,u^0)\, dx - \tilde{M}\|u^+\|_{\mathbb{X}(\Omega)},
\end{aligned}
\end{equation}
and the conclusion now easily follows from $(F_{-\infty})$.
\end{proof}
\end{lemma}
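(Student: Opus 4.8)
The plan is to decompose an element $u\in\mathbb{P}_{k+1}\oplus H^0_k$ as $u=u^++u^0$ with $u^+\in\mathbb{P}_{k+1}$ and $u^0\in H^0_k$ (here $\lambda=\lambda_k$), and to show that $\mathcal{J}_{\lambda_k}(u)$ is large whenever \emph{at least one} of the two summands is large: the coercive estimate of Lemma~\ref{lemma_claim} controls the $u^+$ direction, while the Ahmad--Lazer--Paul hypothesis $(F_{-\infty})$ controls the $u^0$ direction, since $H^0_k=\mathrm{Ker}(\LL-\lambda)$.

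First I would split the quadratic part of $\mathcal{J}_{\lambda_k}$. Because $\mathbb{P}_{k+1}=(H_k)^{\perp_{\mathcal{B}_\alpha}}$ and, by Proposition~\ref{prop.Eigenvalue}\,(c), $\mathbb{P}_{k+1}$ is moreover $L^2$-orthogonal to $H_k\supseteq H^0_k$, the mixed terms vanish and
\[
\mathcal{B}_\alpha(u,u)-\lambda_k\|u\|_{L^2(\Omega)}^2=\big(\mathcal{B}_\alpha(u^+,u^+)-\lambda_k\|u^+\|_{L^2(\Omega)}^2\big)+\big(\mathcal{B}_\alpha(u^0,u^0)-\lambda_k\|u^0\|_{L^2(\Omega)}^2\big).
\]
The second bracket is $0$: testing the eigenvalue equation for $u^0\in H^0_k$ against $u^0$ gives $\mathcal{B}_\alpha(u^0,u^0)=\lambda_k\|u^0\|_{L^2(\Omega)}^2$. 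The first bracket is $\ge\beta\|u^+\|_{\mathbb{X}(\Omega)}^2$ by Lemma~\ref{lemma_claim}. For the nonlinear term I would write $\int_\Omega F(x,u)\,dx=\int_\Omega F(x,u^0)\,dx+\int_\Omega\!\int_{u^0(x)}^{u(x)}f(x,t)\,dt\,dx$ and, invoking $(f_{bc})$ with the H\"older and Poincar\'e inequalities exactly as in Lemma~\ref{lm:FboundM} applied to $u-u^0=u^+$, bound the last double integral by $\tilde M\|u^+\|_{\mathbb{X}(\Omega)}$. Altogether
\[
\mathcal{J}_{\lambda_k}(u)\ \ge\ \frac{\beta}{2}\|u^+\|_{\mathbb{X}(\Omega)}^2-\tilde M\|u^+\|_{\mathbb{X}(\Omega)}-\int_\Omega F(x,u^0)\,dx .
\]

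To conclude I would exploit the two terms on the right separately. Set $\phi(t):=\tfrac{\beta}{2}t^2-\tilde M t$, so that $\phi(t)\ge-\tilde M^2/(2\beta)$ for all $t\ge0$ and $\phi(t)\to+\infty$ as $t\to+\infty$; by $(F_{-\infty})$ the quantity $-\int_\Omega F(x,v)\,dx$ tends to $+\infty$ as $\|v\|_{\mathbb{X}(\Omega)}\to\infty$ over $v\in H^0_k$, and by Lemma~\ref{lm:FboundM} it is $\ge-\tilde M\|v\|_{\mathbb{X}(\Omega)}$. Now fix $K>0$. Choose $r_1>0$ with $-\int_\Omega F(x,v)\,dx\ge K+\tilde M^2/(2\beta)$ whenever $v\in H^0_k$, $\|v\|_{\mathbb{X}(\Omega)}\ge r_1$, and choose $r_2>0$ with $\phi(t)\ge K+\tilde M r_1$ for every $t\ge r_2$; put $r:=r_1+r_2$. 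If $\|u\|_{\mathbb{X}(\Omega)}\ge r$, then $\|u^+\|_{\mathbb{X}(\Omega)}+\|u^0\|_{\mathbb{X}(\Omega)}\ge r$ by the triangle inequality, hence either $\|u^0\|_{\mathbb{X}(\Omega)}\ge r_1$ — giving $\mathcal{J}_{\lambda_k}(u)\ge-\tilde M^2/(2\beta)+K+\tilde M^2/(2\beta)=K$ — or $\|u^+\|_{\mathbb{X}(\Omega)}\ge r_2$ while $\|u^0\|_{\mathbb{X}(\Omega)}<r_1$, in which case $\phi(\|u^+\|_{\mathbb{X}(\Omega)})\ge K+\tilde M r_1$ and $-\int_\Omega F(x,u^0)\,dx\ge-\tilde M r_1$, so again $\mathcal{J}_{\lambda_k}(u)\ge K$.

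The only point requiring care is this final combination: largeness of $\|u\|_{\mathbb{X}(\Omega)}$ does not single out which of $u^+,u^0$ is large, so one must make sure the estimate closes in both alternatives and, in particular, that the possibly negative contribution of $-\int_\Omega F(x,u^0)\,dx$ for small $u^0$ is absorbed by the quadratic gain coming from $u^+$. Everything that is genuinely affected by the indefiniteness of $\mathcal{B}_\alpha$ has already been isolated in Lemma~\ref{lemma_claim} (and Lemma~\ref{lm:C1upper}), so the present step is otherwise routine.
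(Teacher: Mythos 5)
Your proof is correct and follows essentially the same route as the paper: the same decomposition $u=u^++u^0$, the same use of Lemma \ref{lemma_claim} on the quadratic part, the same splitting of $\int_\Omega F(x,u)\,dx$ via $\int_{u^0}^{u}f(x,t)\,dt$ bounded by $\tilde M\|u^+\|_{\mathbb{X}(\Omega)}$, arriving at the identical key inequality. The only difference is that you spell out the final two-case argument (which of $\|u^+\|_{\mathbb{X}(\Omega)}$, $\|u^0\|_{\mathbb{X}(\Omega)}$ is large) that the paper compresses into ``the conclusion now easily follows from $(F_{-\infty})$.''
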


A similar statement holds when $(F_{+\infty})$ is in force, namely
\begin{lemma}
\label{lm:lowerbound}
Let $f$ satisfy $(f_{bc})$ and let $k\in \mathbb{N}$ be such that $\lambda_k < \lambda_{k+1}$. For every $K>0$, there exists $r=r(K)>0$ such that $\mathcal{J}_{\lambda}(u)\geq K$ for every $u \in \mathbb{P}_{k+1}$ with $\|u\|_{\mathbb{X}(\Omega)}\geq r$.
\begin{proof}
It suffices to note that, being $u\in \mathbb{P}_{k+1}$,
\begin{equation}
\begin{aligned}
\mathcal{J}_{\lambda_k}(u) &= \dfrac{1}{2}\mathcal{B}_{\alpha}(u,u) - \dfrac{\lambda_k}{2}\|u\|_{L^2(\Omega)}^2 - \int_{\Omega}F(x,u)\, dx \\
&\geq \dfrac{\beta}{2}\|u\|_{\mathbb{X}(\Omega)}^2 - \int_{\Omega}F(x,u)\, dx \quad \mbox{(by Lemma \ref{lemma_claim})}\\
&\geq  \dfrac{\beta}{2}\|u\|_{\mathbb{X}(\Omega)}^2 - \tilde{M}\|u\|_{\mathbb{X}(\Omega)}, \quad \mbox{(by Lemma \ref{lm:FboundM})}.
\end{aligned}
\end{equation}
The conclusion now easily follows.
\end{proof}
\end{lemma}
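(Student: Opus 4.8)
The plan is to exploit the coercivity of the quadratic part of $\mathcal{J}_{\lambda}$ on the subspace $\mathbb{P}_{k+1}$ — which is precisely the content of Lemma \ref{lemma_claim} — together with the at most linear growth of the potential term supplied by Lemma \ref{lm:FboundM}. Since at resonance $\lambda=\lambda_k$, for $u\in\mathbb{P}_{k+1}$ I would first expand
\[
\mathcal{J}_{\lambda_k}(u)=\dfrac{1}{2}\mathcal{B}_{\alpha}(u,u)-\dfrac{\lambda_k}{2}\|u\|_{L^2(\Omega)}^2-\int_{\Omega}F(x,u)\,dx,
\]
then bound the first two terms from below by $\dfrac{\beta}{2}\|u\|_{\mathbb{X}(\Omega)}^2$ using \eqref{claim}, and bound the absolute value of the last term from above by $\tilde{M}\|u\|_{\mathbb{X}(\Omega)}$ using \eqref{eq:estF}.

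Combining the two estimates yields
\[
\mathcal{J}_{\lambda_k}(u)\geq\dfrac{\beta}{2}\|u\|_{\mathbb{X}(\Omega)}^2-\tilde{M}\|u\|_{\mathbb{X}(\Omega)}=\|u\|_{\mathbb{X}(\Omega)}\left(\dfrac{\beta}{2}\|u\|_{\mathbb{X}(\Omega)}-\tilde{M}\right),
\]
and since $\beta>0$ the right-hand side tends to $+\infty$ as $\|u\|_{\mathbb{X}(\Omega)}\to+\infty$. Hence, given $K>0$, it suffices to choose $r=r(K)>0$ so large that $\dfrac{\beta}{2}r^2-\tilde{M}r\geq K$ — any $r\geq\dfrac{\tilde{M}+\sqrt{\tilde{M}^2+2\beta K}}{\beta}$ does the job — to conclude that $\mathcal{J}_{\lambda_k}(u)\geq K$ whenever $u\in\mathbb{P}_{k+1}$ and $\|u\|_{\mathbb{X}(\Omega)}\geq r$.

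There is no serious obstacle here once Lemmas \ref{lemma_claim} and \ref{lm:FboundM} are available: the indefiniteness of $\mathcal{B}_{\alpha}$ has already been neutralized on $\mathbb{P}_{k+1}$, where the spectral gap $\lambda_k<\lambda_{k+1}$ is exactly what forces the positive constant $\beta$ to appear. The only point to keep in mind is the strictness of $\lambda_k<\lambda_{k+1}$, without which the quadratic form would be merely nonnegative and the coercivity argument would break down. In contrast with the companion Lemma \ref{lm:lowerbound-}, here there is no need to split $u=u^++u^0$ along $\mathbb{P}_{k+1}\oplus H_k^0$ nor to invoke $(F_{\pm\infty})$: on $\mathbb{P}_{k+1}$ alone the quadratic term already dominates the linear bound on $F$, so the statement follows from $(f_{bc})$ only.
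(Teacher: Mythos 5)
Your argument is the same as the paper's: apply Lemma \ref{lemma_claim} to bound the quadratic part from below by $\frac{\beta}{2}\|u\|_{\mathbb{X}(\Omega)}^2$ on $\mathbb{P}_{k+1}$, then Lemma \ref{lm:FboundM} to control the potential term by $\tilde{M}\|u\|_{\mathbb{X}(\Omega)}$, and conclude by coercivity; your explicit choice of $r(K)$ just makes precise the paper's ``the conclusion now easily follows.'' The proof is correct.
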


\begin{remark}
An immediate consequence of Lemma \ref{lm:lowerbound-} or of Lemma \ref{lm:lowerbound} is that
 \begin{equation}\label{eq:liminf}
 \liminf_{ \substack{u \in \mathbb{P}_{k+1} \\ \| u\|_{\mathbb{X}(\Omega)} \to + \infty} } \dfrac{\mathcal{J}_{\lambda_k} (u)}{ \| u\|_{\mathbb{X}(\Omega)}^2} >0.
 \end{equation}
\end{remark}

\begin{proposition}
\label{pr:geometryofSaddlepoint}
Let $f$ satisfy $(f_{bc})$ and let $\lambda_k<\lambda_{k+1}$ for some $k \in \N$. If $(F_{+\infty})$ is in force, then we have 
 \begin{equation}
 \label{eq:limH}
 \lim_{ \substack{u \in H_{k} \\ \| u\|_{\mathbb{X}(\Omega)} \to + \infty} } J_{\lambda_k} (u)= -\infty,
 \end{equation}
 while if $(F_{-\infty})$ holds, then
 \begin{equation}
 \label{eq:limH-}
 \lim_{ \substack{u \in H^{-}_{k} \\ \| u\|_{\mathbb{X}(\Omega)} \to + \infty} } J_{\lambda_k} (u)= -\infty,
 \end{equation}
\end{proposition}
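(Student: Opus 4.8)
The plan is to prove both limits by the same two-step strategy: on the relevant finite-dimensional space, split $u$ according to the eigenspace decomposition, bound the quadratic part $\frac12\mathcal{B}_\alpha(u,u)-\frac{\lambda_k}{2}\|u\|_{L^2}^2$ from above by a negative definite term using Lemma~\ref{lm:C1upper}, and then use the Ahmad--Lazer--Paul condition $(F_{\pm\infty})$ together with the sublinear bound on $F$ from Lemma~\ref{lm:FboundM} to absorb the remaining terms. I would treat \eqref{eq:limH} first, since there the kernel component is handled by $(F_{+\infty})$.

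For \eqref{eq:limH}, write $u=u^0+u^-\in H_k$ with $u^0\in H_k^0$, $u^-\in H_k^-$. By Proposition~\ref{prop.Eigenvalue}(c) the cross terms in $\mathcal{B}_\alpha(u,u)-\lambda_k\|u\|_{L^2}^2$ vanish, and Lemma~\ref{lm:C1upper} gives
\[
\tfrac12\mathcal{B}_\alpha(u,u)-\tfrac{\lambda_k}{2}\|u\|_{L^2(\Omega)}^2\le -\tfrac{\gamma}{2}\|u^-\|_{\mathbb{X}(\Omega)}^2.
\]
Next, as in Lemma~\ref{lm:lowerbound-}, I split $\int_\Omega F(x,u)\,dx=\int_\Omega F(x,u^0)\,dx+\int_\Omega\int_{u^0}^{u}f(x,t)\,dt\,dx$; the second integral is bounded in absolute value by $M\int_\Omega|u^-|\le \tilde M\|u^-\|_{\mathbb{X}(\Omega)}$ by Hölder and Poincaré (exactly the estimate of Lemma~\ref{lm:FboundM} applied to $u^-$). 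Hence
\[
\mathcal{J}_{\lambda_k}(u)\le -\tfrac{\gamma}{2}\|u^-\|_{\mathbb{X}(\Omega)}^2+\tilde M\|u^-\|_{\mathbb{X}(\Omega)}-\int_\Omega F(x,u^0)\,dx.
\]
Since on the finite-dimensional $H_k$ the norms $\|u\|_{\mathbb{X}(\Omega)}$ and $\|u^0\|_{\mathbb{X}(\Omega)}+\|u^-\|_{\mathbb{X}(\Omega)}$ are equivalent, $\|u\|_{\mathbb{X}(\Omega)}\to\infty$ forces $\|u^0\|_{\mathbb{X}(\Omega)}\to\infty$ or $\|u^-\|_{\mathbb{X}(\Omega)}\to\infty$. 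In the latter case the quadratic term $-\frac{\gamma}{2}\|u^-\|^2$ dominates the linear one and drives $\mathcal{J}_{\lambda_k}$ to $-\infty$ (note $-\int_\Omega F(x,u^0)\,dx$ is bounded below only if $\|u^0\|$ stays bounded, but in fact by $(f_{bc})$ it is at worst $\le \tilde M\|u^0\|_{\mathbb{X}(\Omega)}$, which is again sublinear and harmless). In the former case, $u^0$ ranges in the finite-dimensional kernel $\mathrm{Ker}(\LL-\lambda_k)=H_k^0$, so $(F_{+\infty})$ gives $\int_\Omega F(x,u^0)\,dx\to+\infty$, hence $-\int_\Omega F(x,u^0)\,dx\to-\infty$, while $-\frac{\gamma}{2}\|u^-\|^2+\tilde M\|u^-\|$ is bounded above by $\tilde M^2/(2\gamma)$; combining, $\mathcal{J}_{\lambda_k}(u)\to-\infty$. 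The case where both norms blow up is handled by the same two bounds simultaneously. This proves \eqref{eq:limH}.

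For \eqref{eq:limH-} the argument is strictly simpler: on $H_k^-$ there is no kernel component, so Lemma~\ref{lm:C1upper} directly yields $\frac12\mathcal{B}_\alpha(u,u)-\frac{\lambda_k}{2}\|u\|_{L^2}^2\le-\frac{\gamma}{2}\|u\|_{\mathbb{X}(\Omega)}^2$, and Lemma~\ref{lm:FboundM} gives $\mathcal{J}_{\lambda_k}(u)\le-\frac{\gamma}{2}\|u\|_{\mathbb{X}(\Omega)}^2+\tilde M\|u\|_{\mathbb{X}(\Omega)}\to-\infty$ as $\|u\|_{\mathbb{X}(\Omega)}\to\infty$; here $(F_{-\infty})$ is not even needed for this particular limit, though of course it is what makes $H_k^-$ the correct space to use in the subsequent saddle-point/multiplicity argument. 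The main (and only real) subtlety is the bookkeeping in \eqref{eq:limH}: one must be careful that the term $-\int_\Omega F(x,u^0)\,dx$ cannot be controlled by the quadratic term (they live on complementary subspaces), so the conclusion genuinely requires combining the negative-definiteness on $H_k^-$ with $(F_{+\infty})$ on $H_k^0$, and checking that in every regime of how $\|u^0\|$ and $\|u^-\|$ individually tend to infinity the sum diverges to $-\infty$ — this is the step I would write out most carefully.
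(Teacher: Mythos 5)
Your proof is correct and follows essentially the same route as the paper: the same decomposition $u=u^0+u^-$, the same upper bound $-\tfrac{\gamma}{2}\|u^-\|_{\mathbb{X}(\Omega)}^2+\tilde M\|u^-\|_{\mathbb{X}(\Omega)}-\int_\Omega F(x,u^0)\,dx$ via Lemma~\ref{lm:C1upper} and the $F$-splitting of Lemma~\ref{lm:FboundM}, and the same case analysis on which of $\|u^0\|_{\mathbb{X}(\Omega)}$, $\|u^-\|_{\mathbb{X}(\Omega)}$ diverges (including the observation that \eqref{eq:limH-} needs only $(f_{bc})$). The only blemish is the parenthetical claim that the bound $-\int_\Omega F(x,u^0)\,dx\le\tilde M\|u^0\|_{\mathbb{X}(\Omega)}$ is ``harmless'' in the case $\|u^-\|_{\mathbb{X}(\Omega)}\to\infty$ — a linear term in $\|u^0\|$ is not controlled by a quadratic term in $\|u^-\|$ — but your subsequent treatment of the regime where $\|u^0\|_{\mathbb{X}(\Omega)}\to\infty$ via $(F_{+\infty})$ already covers that situation, so the argument is complete.
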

 \begin{proof}
Let us start with the case in which $(F_{+\infty})$ holds.
 Since $u\in H_k$ we can write $u=u^-+u^0$ with $u^- \in H_k^-$ and $u^0 \in H_k^0$. Then we have
 \begin{align*}
 \mathcal{J}_{\lambda_k}(u)=&\dfrac{1}{2}\mathcal{B}_{\alpha}(u,u)-\dfrac{\lambda_k}{2} \int_{\Omega} |u(x)|^2 dx- \int_{\Omega} (F(x,u^-(x)+u^0(x))-F(x,u^0(x))) dx\\
 &\quad -\int_{\Omega} F(x,u^0(x)) dx
 \end{align*}
 Notice that, as in proof of Lemma \ref{lm:FboundM}, we have 
 \begin{align*}
 \left| \int_{\Omega} (F(x,u^-(x)+u^0(x))-F(x,u^0(x))) dx \right| &\le \left|  \int_{\Omega} \int_{u^0(x)}^{u^-(x)+u^0(x)} f(x,t) \, dt dx \right|\\
 &\le M \int_{\Omega} |u^-(x)| dx\\
 &\le \tilde{M} \|u^-\|_{\mathbb{X}(\Omega)}.
 \end{align*}
Thus,  by Lemma \ref{lm:C1upper} and the previous inequality, we obtain 
 \begin{equation}
 \label{eq:Jminf}
 \mathcal{J}_{\lambda_k}(u)\le - \dfrac{\gamma}{2} \|u^-\|_{\mathbb{X}(\Omega)}^2 + \tilde{M} \|u^- \|_{\mathbb{X}(\Omega)} -\int_{\Omega} F(x,u^0(x)) dx.
 \end{equation}
Moreover, by Proposition \ref{prop.Eigenvalue}(c) and the Cauchy-Schwarz inequality, we have 
 \begin{equation*}
 \begin{aligned}
 \|u\|_{\mathbb{X}(\Omega)}^2 &=\|u^0\|_{\mathbb{X}(\Omega)}^2+\|u^-\|_{\mathbb{X}(\Omega)}^2 + 2\langle u^0, u^- \rangle_{\mathbb{X}(\Omega)} \\
 &\leq \|u^0\|_{\mathbb{X}(\Omega)}^2+\|u^-\|_{\mathbb{X}(\Omega)}^2 + 2 \, \|u^0\|_{\mathbb{X}(\Omega)} \, \|u^-\|_{\mathbb{X}(\Omega)}.
 \end{aligned}
 \end{equation*}
 Thus, since $ \|u\|_{\mathbb{X}(\Omega)}$ diverges at $+ \infty$ we have that at least one of the two norms, either $\|u^0\|_{\mathbb{X}(\Omega)}$ or $\|u^-\|_{\mathbb{X}(\Omega)}$, goes to infinity, as well. Assume that $\|u^0\|_{\mathbb{X}(\Omega)} \to + \infty$, then $\|u^-\|_{\mathbb{X}(\Omega)}$ can be finite or infinite. By $(F_{+\infty})$ and by \eqref{eq:Jminf} we get $\mathcal{J}_{\lambda}(u) \to -\infty$. Otherwise, suppose  that $\|u^0\|_{\mathbb{X}(\Omega)}$ is finite, then $\|u^-\|_{\mathbb{X}(\Omega)}$ diverges to $+ \infty$ and by Lemma \ref{lm:FboundM} the last term in \eqref{eq:Jminf} has a linear growth. Hence $\mathcal{J}_{\lambda_k}(u) \to -\infty$. This closes the first part.

The case in which $(F_{-\infty})$ holds is rather simpler. Indeed, keeping the notation $u^-$ for functions in $H^-_{k}$ and reasoning as above, we have that
 \[
 \label{eq:Jminf-}
 \mathcal{J}_{\lambda_k}(u^-)\le - \dfrac{\gamma}{2} \|u^-\|_{\mathbb{X}(\Omega)}^2 + \bar{M} \|u^- \|_{\mathbb{X}(\Omega)},
 \]
an this closes the proof.
\end{proof}

\subsection{Palais-Smale condition}
Let us start recalling the following notion.
\begin{definition}
We say that $\{u_j\}_{j \in \N}$ is a Palais-Smale sequence for $\mathcal{J}_{\lambda}$ at level $c \in \rr$ if
$
\mathcal{J}_{\lambda}(u_j) \to c
$ as $j\to \infty$
and
\begin{equation}
\label{eq:PS}
\mathcal{J}'_{\lambda}(u_j) \to 0, \quad \textrm{ as } j \to +\infty
\end{equation}
holds true.
\end{definition}

\begin{proposition}
\label{pr:ujbounded}
Let $f$ satisfy $(f_{bc})$ and $(F_{\pm\infty})$. Suppose further that $\lambda_k<\lambda_{k+1}$ for some $k \in \N$. If $\{u_j\}_{j \in \N}$ is a Palais-Smale sequence for $\mathcal{J}_{\lambda_k}$, then $\{u_j\}_{j \in \N}$ is bounded in $\mathbb{X}(\Omega)$.
\end{proposition}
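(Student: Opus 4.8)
The plan is to argue by contradiction, assuming that $\|u_j\|_{\mathbb{X}(\Omega)} \to +\infty$ along a Palais--Smale sequence, and to extract from this a contradiction with assumption $(F_{\pm\infty})$. The natural decomposition to use is the splitting $\mathbb{X}(\Omega) = H_k^- \oplus H_k^0 \oplus \mathbb{P}_{k+1}$ coming from Proposition \ref{prop.Eigenvalue}, so we write $u_j = u_j^- + u_j^0 + u_j^+$ with $u_j^- \in H_k^-$, $u_j^0 \in H_k^0$, $u_j^+ \in \mathbb{P}_{k+1}$. The first step is to test the differential $\mathcal{J}'_{\lambda_k}(u_j)$ against the test functions $u_j^+$ and $-u_j^-$, which live in the subspaces on which $\mathcal{B}_{\alpha}(\cdot,\cdot) - \lambda_k\|\cdot\|_{L^2}^2$ is, respectively, coercive (by Lemma \ref{lemma_claim}) and negative definite (by Lemma \ref{lm:C1upper}). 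Since $f$ is bounded by $M$ (assumption $(f_{bc})$), the nonlinear terms $\int_\Omega f(x,u_j)\,u_j^\pm\,dx$ are controlled by $M|\Omega|^{1/2}\|u_j^\pm\|_{L^2(\Omega)} \le \tilde{M}\|u_j^\pm\|_{\mathbb{X}(\Omega)}$ as in Lemma \ref{lm:FboundM}, and $\mathcal{J}'_{\lambda_k}(u_j)(u_j^\pm) = o(\|u_j^\pm\|_{\mathbb{X}(\Omega)})$ from the Palais--Smale condition. Combining, we obtain
\[
\beta\|u_j^+\|_{\mathbb{X}(\Omega)}^2 \le \tilde{M}\|u_j^+\|_{\mathbb{X}(\Omega)} + o(\|u_j^+\|_{\mathbb{X}(\Omega)}), \qquad \gamma\|u_j^-\|_{\mathbb{X}(\Omega)}^2 \le \tilde{M}\|u_j^-\|_{\mathbb{X}(\Omega)} + o(\|u_j^-\|_{\mathbb{X}(\Omega)}),
\]
which forces $\|u_j^+\|_{\mathbb{X}(\Omega)}$ and $\|u_j^-\|_{\mathbb{X}(\Omega)}$ to be bounded. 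Hence the divergence of $\|u_j\|_{\mathbb{X}(\Omega)}$ must come entirely from the kernel component: $\|u_j^0\|_{\mathbb{X}(\Omega)} \to +\infty$.

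The second step uses the energy level. Since $\mathcal{J}_{\lambda_k}(u_j) \to c$ is bounded, and since on $H_k^0$ the quadratic part $\mathcal{B}_{\alpha}(u_j^0,u_j^0) - \lambda_k\|u_j^0\|_{L^2(\Omega)}^2$ vanishes identically (as $u_j^0$ is a combination of eigenfunctions with eigenvalue exactly $\lambda_k$), we can expand
\[
\mathcal{J}_{\lambda_k}(u_j) = \tfrac{1}{2}\bigl(\mathcal{B}_{\alpha}(u_j^+,u_j^+) - \lambda_k\|u_j^+\|_{L^2(\Omega)}^2\bigr) + \tfrac{1}{2}\bigl(\mathcal{B}_{\alpha}(u_j^-,u_j^-) - \lambda_k\|u_j^-\|_{L^2(\Omega)}^2\bigr) - \int_\Omega F(x,u_j)\,dx.
\]
The two quadratic brackets are bounded because $\|u_j^\pm\|_{\mathbb{X}(\Omega)}$ are bounded. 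Writing $\int_\Omega F(x,u_j)\,dx = \int_\Omega F(x,u_j^0)\,dx + \int_\Omega\int_{u_j^0(x)}^{u_j(x)} f(x,t)\,dt\,dx$ and bounding the second piece by $\tilde{M}(\|u_j^+\|_{\mathbb{X}(\Omega)} + \|u_j^-\|_{\mathbb{X}(\Omega)})$ (which is bounded), we deduce that $\int_\Omega F(x,u_j^0)\,dx$ stays bounded. But $\|u_j^0\|_{\mathbb{X}(\Omega)} \to +\infty$ with $u_j^0 \in \mathrm{Ker}(\mathcal{L}_\alpha - \lambda_k)$, so assumption $(F_{\pm\infty})$ forces $\int_\Omega F(x,u_j^0)\,dx \to \pm\infty$, a contradiction. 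This completes the proof.

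I expect the main obstacle to be the bookkeeping in the first step: one must be careful that testing against $u_j^+$ really isolates the $\mathbb{P}_{k+1}$ part of the quadratic form — this uses the $\mathcal{B}_\alpha$-orthogonality of the three subspaces from Proposition \ref{prop.Eigenvalue}(c), so that $\mathcal{B}_{\alpha}(u_j, u_j^+) = \mathcal{B}_{\alpha}(u_j^+, u_j^+)$ and likewise $\int_\Omega u_j u_j^+\,dx = \int_\Omega |u_j^+|^2\,dx$ by $L^2$-orthogonality of eigenfunctions — and similarly for $u_j^-$. A secondary subtlety is that the indefiniteness of $\mathcal{B}_\alpha$ when $\alpha < -1/C_s$ means one genuinely needs \emph{both} Lemma \ref{lemma_claim} (coercivity on $\mathbb{P}_{k+1}$) and Lemma \ref{lm:C1upper} (strict negativity on $H_k^-$); a naive attempt to test only against $u_j$ itself would mix the positive and negative directions and fail. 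Once these orthogonality reductions are in place, the rest is the routine Palais--Smale estimate combined with the Ahmad--Lazer--Paul condition exactly as in \cite{Rabinowitz78} and \cite{FiSeVaZAMP}.
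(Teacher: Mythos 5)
Your proposal is correct and follows essentially the same route as the paper: the same decomposition $u_j=u_j^-+u_j^0+u_j^+$, the same tests of $\mathcal{J}'_{\lambda_k}(u_j)$ against $u_j^{\pm}$ combined with Lemmas \ref{lemma_claim} and \ref{lm:C1upper} to bound $u_j^{\pm}$, and the same energy-level argument with $(F_{\pm\infty})$ to bound $u_j^0$. The only (cosmetic) difference is that you phrase the final step as a contradiction, whereas the paper argues directly; the substance is identical.
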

\begin{proof}
Let $u_j=u_j^0+u_j^-+u_j^+$ where $u_j^0 \in H_k^0$, $u_j^- \in H_k^-$ and $u_j^+ \in \mathbb{P}_{k+1}$. We will show that all sequence $u_j^0, u_j^-,u_j^+$ are bounded.

Let us start noticing that by \eqref{eq:PS}, we have 
\begin{equation}
\label{eq:ujpm}
\begin{aligned}
\epsilon(1)\| u_j^{\pm}\|_{\mathbb{X}(\Omega)} \ge& \left|\escpr{\mathcal{J}_{\lambda_k} ' (u_j), u_j^{\pm}} \right| \\
 =&\left| \mathcal{B}_{\alpha}(u_j,u_j^{\pm})-\lambda_k \int_{\Omega}  u_j (x) u_j^{\pm}(x)\, dx- \int_{\Omega} f(x,u_j(x)) u_j^{\pm}(x) dx \right|,
\end{aligned}
\end{equation}
where $\epsilon(1)\to 0$ as $j\to \infty$. Since $f$ is bounded, similarly to Lemma \ref{lm:FboundM} we have
\begin{equation}
\label{eq:fujbound}
\left|\int_{\Omega} f(x,u_j(x)) u_j^{\pm} (x) \ dx\right| \le \tilde{M} \| u_j^{\pm}\|_{\mathbb{X}(\Omega)}.
\end{equation}
Thanks to Proposition \ref{prop.Eigenvalue} (c), we have 
\begin{equation}
\label{eq:pm}
\escpr{\mathcal{J}_{\lambda_k} ' (u_j), u_j^{\pm}} = \mathcal{B}_{\alpha}(u_j^{\pm},u_j^{\pm})-\lambda_k \int_{\Omega}   |u_j^{\pm}(x) |^2 \, dx- \int_{\Omega} f(x,u_j(x)) u_j^{\pm}(x) dx.
\end{equation}
Since $u_j^+$ belongs to $\mathbb{P}_{k+1}$, by  Lemma \ref{lemma_claim} and \eqref{eq:fujbound} we get 
\[
\epsilon(1)\| u_j^{+}\|_{\mathbb{X}(\Omega)} \ge \beta \| u_j^+\|_{\X(\Omega)}^2-\tilde{M} \| u_j^{+}\|_{\mathbb{X}(\Omega)},
\]
so that the sequence $\{u_j^+\}_{j\in \N}$ is bounded in $\mathbb{X}(\Omega)$. Furthermore, again by \eqref{eq:ujpm}, \eqref{eq:pm}, Lemma \ref{lm:C1upper}  and \eqref{eq:fujbound} we get
\[
\epsilon(1)\| u_j^{-}\|_{\mathbb{X}(\Omega)} \ge - \escpr{\mathcal{J}_{\lambda_k} ' (u_j), u_j^{-}} \ge \gamma \|u^-\|_{\mathbb{X}(\Omega)}^2- \tilde{M} \|u^-\|_{\mathbb{X}(\Omega)}.
\]
Then the sequence $\{u_j^-\}_{j\in \N}$ is bounded in $\mathbb{X}(\Omega)$, as well. 

We finally prove that $u_j^0$ is bounded in $\mathbb{X}(\Omega)$. First of all, we recall that  $u_j^0$ is an eigenfunction associated to $\lambda_k$, namely
\begin{equation}
\label{eq:u_j0eig}
\mathcal{B}_{\alpha}(u_j^0,u_j^0)=\lambda_{k} \int_{\Omega}  |u_j^0(x)|^2 dx.
\end{equation}
By the Palais-Smale condition, the equation \eqref{eq:u_j0eig} and Proposition \ref{prop.Eigenvalue} (c), we gain 
\begin{align*}
c \leftarrow J_{\lambda_k}(u_j)=&\dfrac{1}{2} \mathcal{B}_{\alpha}(u_j^+,u_j^+)+ \dfrac{1}{2}\mathcal{B}_{\alpha}(u_j^-,u_j^-)-\dfrac{\lambda_k}{2} \int_{\Omega}  \left(|u_j^+(x)|^2+|u_j^-(x)|^2 \right) dx\\
 &- \int_{\Omega} \left(F(x,u_j(x))- F(x,u_j^0(x)) \right)dx-  \int_{\Omega} F(x,u_j^0(x)) dx .
\end{align*}
Thus,  we have that 
\begin{equation}
\label{eq:Ffixed}
\begin{aligned}
& \left|\int_{\Omega} F(x,u_j^0(x)) dx\right| \le |J_{\lambda_k}(u_j)|+ \bigg|\dfrac{1}{2} \mathcal{B}_{\alpha}(u_j^+,u_j^+)+ \dfrac{1}{2}\mathcal{B}_{\alpha}(u_j^-,u_j^-) \\
& \qquad -\dfrac{\lambda_k}{2} \int_{\Omega} \left(|u_j^+(x)|^2+|u_j^-(x)|^2 \right) dx\
 - \int_{\Omega} \left(F(x,u_j(x))- F(x,u_j^0(x)) \right)dx \bigg|
\end{aligned}
\end{equation}
By the Poincaré inequality and the bound on $u_j^+$ and $u_j^-$ we gain 
\[ 
\left|\dfrac{\lambda_k}{2} \int_{\Omega} \left(|u_j^+(x)|^2+|u_j^-(x)|^2 \right) dx\right| \le C\left( \|u_j^+\|_{\mathbb{X}(\Omega)}^2+ \|u_j^-\|_{\mathbb{X}(\Omega)}^2 \right) \le \tilde C
\]
for some $\tilde C>0$ and all $j\in \N$. Moreover,
\begin{align*}
\left|\int_{\Omega} \left(F(x,u_j(x))- F(x,u_j^0(x)) \right)\right| \le & \int_{\Omega}\left| \int_{u_j^0(x)}^{u_j^0(x) + u_j^+(x)+ u_j^-(x)}f(x,t) dt\right|\\
\le & M \int_{\Omega} \left( |u_j^-|+ |u_j^+| \right) dx\\
\le & \tilde{M} \left( \| u_j^-\|_{\mathbb{X}(\Omega)} +\|u_j^+ \|_{\mathbb{X}(\Omega)}  \right) \le C_1
\end{align*}
for some $C_1$ and all $j\in \N$. Therefore, from \eqref{eq:Ffixed}, recalling that $u_j^\pm$ are bounded, we obtain
\[
 \left|\int_{\Omega} F(x,u_j^0(x)) dx\right| \le |c|+o(1)+ \bigg|\dfrac{1}{2} \mathcal{B}_{\alpha}(u_j^+,u_j^+)+ \dfrac{1}{2}\mathcal{B}_{\alpha}(u_j^-,u_j^-) \bigg|+ \tilde{C}+C_1\leq C_2,
\]
where $C_2>0$ is a constant independent of $j$ and $o(1)\to 0$ as $j\to \infty$. Hence the sequence of integrals $\int_{\Omega} F(x,u_j^0(x)) dx$ is bounded. Finally, since $u_j^0$ belongs to $H_k^0$, by $(F_{\pm \infty})$ we get that $u_j^0$ is bounded in $\mathbb{X}(\Omega)$.
\end{proof} 

We establish the validity of the Palais-Smale condition thanks to the following result.
\begin{proposition}
\label{pr:PScondition}
 Let $f$ satisfy $(f_{bc})$ and $(F_{\pm\infty})$. Suppose further that $\lambda_k<\lambda_{k+1}$ for some $k \in \N$.  If $\{u_j\}_{j \in \N}$ is a Palais-Smale sequence for $\mathcal{J}_{\lambda_k}$, then there exists $u_{\infty}$ in $\mathbb{X}(\Omega)$ such that $u_j$ strongly converges to $u_{\infty}$ in $\mathbb{X}(\Omega)$.
\end{proposition}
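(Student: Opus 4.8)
The strategy is the standard one for resonant problems, once boundedness of Palais--Smale sequences is in hand: extract a weakly convergent subsequence, upgrade to strong convergence using the compact embeddings of $\mathbb{X}(\Omega)$ and the boundedness of $f$, and exploit that the Hilbert norm of $\mathbb{X}(\Omega)$ is induced \emph{solely} by the local part $\int_\Omega|\nabla u|^2$ of $\mathcal{B}_\alpha$.

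First I would invoke Proposition \ref{pr:ujbounded} to get that $\{u_j\}_{j\in\N}$ is bounded in $\mathbb{X}(\Omega)$. Since $\mathbb{X}(\Omega)$ is a Hilbert space, up to a subsequence (not relabelled) there is $u_\infty\in\mathbb{X}(\Omega)$ with $u_j\rightharpoonup u_\infty$ weakly in $\mathbb{X}(\Omega)$; by the compact embeddings recalled in Section \ref{sec.NotPrel}, $u_j\to u_\infty$ strongly in $L^2(\Omega)$ and in $H^s_0(\Omega)$, hence in particular $\|u_j-u_\infty\|_{L^2(\Omega)}\to 0$ and $[u_j-u_\infty]_s\to 0$.

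Next I would test $\mathcal{J}_{\lambda_k}'(u_j)$ with $u_j-u_\infty$. The pairing $\langle \mathcal{J}_{\lambda_k}'(u_j), u_j-u_\infty\rangle$ tends to $0$, because $\mathcal{J}_{\lambda_k}'(u_j)\to 0$ in $\mathbb{X}(\Omega)^*$ while $\{u_j-u_\infty\}$ is bounded. Expanding the pairing,
\begin{align*}
\langle \mathcal{J}_{\lambda_k}'(u_j), u_j-u_\infty\rangle ={}& \langle u_j, u_j-u_\infty\rangle_{\mathbb{X}(\Omega)} \\
&+ \alpha\iint_{\R^{2n}}\frac{(u_j(x)-u_j(y))\big((u_j-u_\infty)(x)-(u_j-u_\infty)(y)\big)}{|x-y|^{n+2s}}\,dxdy \\
&-\lambda_k\int_\Omega u_j(u_j-u_\infty)\,dx - \int_\Omega f(x,u_j)(u_j-u_\infty)\,dx.
\end{align*}
The nonlocal term is bounded in absolute value by $[u_j]_s\,[u_j-u_\infty]_s$, which goes to $0$ since $[u_j]_s$ is bounded (continuity of $H^1_0\hookrightarrow H^s$) and $[u_j-u_\infty]_s\to 0$; the $\lambda_k$-term is bounded by $|\lambda_k|\,\|u_j\|_{L^2(\Omega)}\|u_j-u_\infty\|_{L^2(\Omega)}\to 0$; and, since $f$ is bounded by $(f_{bc})$, the last term is bounded by $M|\Omega|^{1/2}\|u_j-u_\infty\|_{L^2(\Omega)}\to 0$ (exactly as in Lemma \ref{lm:FboundM}). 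Hence $\langle u_j, u_j-u_\infty\rangle_{\mathbb{X}(\Omega)}\to 0$.

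Finally, weak convergence $u_j\rightharpoonup u_\infty$ in $\mathbb{X}(\Omega)$ gives $\langle u_\infty, u_j-u_\infty\rangle_{\mathbb{X}(\Omega)}\to 0$, so
\[
\|u_j-u_\infty\|_{\mathbb{X}(\Omega)}^2 = \langle u_j, u_j-u_\infty\rangle_{\mathbb{X}(\Omega)} - \langle u_\infty, u_j-u_\infty\rangle_{\mathbb{X}(\Omega)} \longrightarrow 0,
\]
which is the claimed strong convergence. I do not expect a genuine obstacle here: the only point to keep in mind is that the $\mathbb{X}(\Omega)$-norm depends only on $\int_\Omega|\nabla u|^2$, so the (possibly indefinite) nonlocal contribution enters merely as a vanishing error term via the compact embedding into $H^s_0(\Omega)$, regardless of the sign of $\alpha$, and no coercivity of $\mathcal{B}_\alpha$ is needed; the remaining work is routine bookkeeping of the three lower-order terms.
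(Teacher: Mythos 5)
Your proof is correct and follows essentially the same route as the paper: boundedness from Proposition \ref{pr:ujbounded}, weak convergence plus the compact embeddings into $H^s_0(\Omega)$ and $L^2(\Omega)$, and testing $\mathcal{J}_{\lambda_k}'(u_j)$ against $u_j-u_\infty$ so that the nonlocal and lower-order terms vanish regardless of the sign of $\alpha$. The only (harmless) difference is the last step, where you extract $\|u_j-u_\infty\|_{\mathbb{X}(\Omega)}^2\to 0$ directly from the weak convergence, whereas the paper first deduces $\|u_j\|_{\mathbb{X}(\Omega)}\to\|u_\infty\|_{\mathbb{X}(\Omega)}$ and then invokes uniform convexity.
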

\begin{proof}
By Proposition \ref{pr:ujbounded} $u_j$ is bounded and $\mathbb{X}(\Omega)$ is reflexive, since $\mathbb{X}(\Omega)$ is an Hilbert space. Then there exists $u_{\infty} \in \mathbb{X}(\Omega)$ such that, up to a subsequence,  $u_j$ weakly converges to $u_{\infty}$ in $\X(\Omega)$. Since $\mathbb{X}(\Omega)$ is compactly embedded in $H^s_0(\Omega)$ (and so in  $L^2(\Omega)$), then, up to a subsequence, $u_j \to u_{\infty}$ in $H^s_0(\Omega)$ (and so in $L^2(\Omega)$) and $u_j\to u_{\infty}$ a.e. in $\Omega$. This implies that
\begin{equation}
\label{eq:Qujinf}
\mathcal{B}_{\alpha}(u_j, \varphi) \to \mathcal{B}_{\alpha}(u_{\infty},\varphi)
\end{equation}
for all $\varphi \in \mathbb{X}(\Omega)$, as $j\to +\infty$.

Since $u_j$ is a Palais-Smale sequence, we have 
\begin{equation}
\label{eq:u_ju_inf}
\begin{aligned}
0 \gets \escpr{ \mathcal{J}_{\lambda_k}' (u_j), u_j-u_{\infty}}=& \mathcal{B}_{\alpha}(u_j,u_j) -\mathcal{B}_{\alpha}(u_{j}, u_{\infty}) \\
&\quad -\lambda_k \int_{\Omega}   u_j(x)(u_j(x)- u_{\infty}(x)) dx  \\
&\quad - \int_{\Omega} f(x,u_j(x)) (u_j (x)- u_{\infty} (x)) dx.
\end{aligned}
\end{equation}
Now, by the H\"older inequality and the bound on $f$ we get 
\begin{align*}
&\left| \lambda_k \int_{\Omega}   u_j(x)(u_j(x)- u_{\infty}(x)) dx + \int_{\Omega} f(x,u_j(x)) (u_j (x)- u_{\infty} (x)) dx\right|\\
&\le \left( \lambda_k \|u_j\|_{L^2(\Omega)}+ M |\Omega|^{\frac{1}{2}} \right) \| u_j-u_{\infty}\|_{L^2(\Omega)} \to 0,
\end{align*}
as $j\to + \infty$. Therefore, passing to the limit in \eqref{eq:u_ju_inf} and taking into account \eqref{eq:Qujinf} we get 
\[
\mathcal{B}_{\alpha}(u_j,u_j) \to \mathcal{B}_{\alpha}(u_{\infty},u_{\infty}).
\]
Since $u_j\to u$ in $H^s_0(\Omega)$, we conclude that  $\|u_j\|_{\mathbb{X}(\Omega)}\to\|u_{\infty}\|_{\mathbb{X}(\Omega)}$. $\X(\Omega)$ being uniformly convex, we conclude that  $u_j \to u_{\infty}$ strongly in $\mathbb{X}(\Omega)$.
\end{proof}

By combining Propositions \ref{pr:ujbounded} and \ref{pr:PScondition} we have the proof of the following compactness property.
\begin{proposition}\label{psc}
 Let $f$ satisfy $(f_{bc})$ and $(F_{\pm\infty})$. Suppose further that $\lambda_k<\lambda_{k+1}$ for some $k \in \N$. Then $\mathcal{J}_{\lambda_k}$ satisfies the Palais-Smale condition at level $c$ for any $c\in \R$, namely every  Palais-Smale sequence at level $c$ admits a strongly convergent subsequence.
\end{proposition}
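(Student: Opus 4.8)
The plan is to obtain the statement as an immediate corollary of the two preceding propositions, so the argument will be essentially a one-liner once they are in place. First I would fix an arbitrary level $c\in\R$ and an arbitrary Palais-Smale sequence $\{u_j\}_{j\in\N}\subset\mathbb{X}(\Omega)$ for $\mathcal{J}_{\lambda_k}$ at level $c$, that is, one satisfying $\mathcal{J}_{\lambda_k}(u_j)\to c$ and $\mathcal{J}_{\lambda_k}'(u_j)\to 0$ in the dual of $\mathbb{X}(\Omega)$. Since $f$ fulfils $(f_{bc})$ and $(F_{\pm\infty})$ and since $\lambda_k<\lambda_{k+1}$, Proposition \ref{pr:ujbounded} applies verbatim and yields that $\{u_j\}$ is bounded in $\mathbb{X}(\Omega)$; then Proposition \ref{pr:PScondition}, applied to the very same sequence, produces $u_{\infty}\in\mathbb{X}(\Omega)$ together with a subsequence converging strongly to $u_{\infty}$ in $\mathbb{X}(\Omega)$. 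Such a strongly convergent subsequence is exactly what the Palais-Smale condition at level $c$ demands, and since $c\in\R$ was arbitrary we conclude that $\mathcal{J}_{\lambda_k}$ satisfies the Palais-Smale condition at every level.

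I do not expect any genuine obstacle in this step: all the analytic content has already been absorbed into Propositions \ref{pr:ujbounded} and \ref{pr:PScondition}. For orientation, the boundedness of Palais-Smale sequences rests on the splitting $\mathbb{X}(\Omega)=H_k^{0}\oplus H_k^{-}\oplus\mathbb{P}_{k+1}$ together with the positivity estimate of Lemma \ref{lemma_claim} on $\mathbb{P}_{k+1}$, the negativity estimate of Lemma \ref{lm:C1upper} on $H_k^{-}$, the linear bound of Lemma \ref{lm:FboundM}, and the Ahmad-Lazer-Paul condition $(F_{\pm\infty})$ to control the component lying in $H_k^{0}$; while the passage from weak to strong convergence uses the compact embeddings $\mathbb{X}(\Omega)\hookrightarrow L^2(\Omega)$ and $\mathbb{X}(\Omega)\hookrightarrow H^s_0(\Omega)$ together with the uniform convexity of the Hilbert space $\mathbb{X}(\Omega)$. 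Consequently, the only thing to record in the proof is that the two propositions, read in sequence, reproduce precisely the definition of the Palais-Smale condition at an arbitrary level, and the proof is complete.
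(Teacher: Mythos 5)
Your proposal is correct and coincides with the paper's argument: the paper itself obtains Proposition \ref{psc} simply by combining Proposition \ref{pr:ujbounded} (boundedness of Palais-Smale sequences) with Proposition \ref{pr:PScondition} (extraction of a strongly convergent subsequence), exactly as you do. No further comment is needed.
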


We are now ready to conclude with the

\begin{proof}[Proof of Theorem $\ref{th:main}$] Let us start fixing some notation. Since $\lambda$ is an eigenvalue, there exists $k\in \N$ such that $\lambda=\lambda_k<\lambda_{k+1}$. Once $k$ has been found, we fix the decomposition
$\mathbb{X}(\Omega)=H_k \oplus\mathbb{P}_{k+1} $, with  $H_k$ having finite dimension.

Let us start with the case in which $(F_{+\infty})$ is in force. From \eqref{eq:liminf} for any $H>0$ there exist $R>0$ such that, if $u \in \mathbb{P}_{k+1} $ and $\|u\|_{\mathbb{X}(\Omega)}\ge R$, then 
\[
J_{\lambda_k}(u)>H.
\]
When $u \in \mathbb{P}_{k+1} $ and $\|u\|_{\mathbb{X}(\Omega)}\le R$, by \eqref{eq:Stima1}, the Rellich-Kondrachov Theorem, the H\"older inequality  and \eqref{eq:Stima1} we have

\begin{align*}
J_{\lambda_k}(u)&\ge \dfrac{\lambda_{k+1}-\lambda_k}{2} \int_{\Omega} |u(x)|^2 dx- \int_{\Omega} F(x,u(x)) dx\\
& \ge - M \int_{\Omega} |u(x)|dx
 \ge  - \tilde{M} \|u(x)\|_{\mathbb{X}(\Omega)}
 \ge - \tilde{M}R=:-C_{R},
\end{align*}

where $\tilde{M}$ is a positive constant. Therefore, we obtain 
\begin{equation}
\label{eq:SPTI4}
J_{\lambda_k}(u) \ge -C_R \quad \text{for any} \quad u \in \mathbb{P}_{k+1}.
\end{equation}

Furthermore, by \eqref{eq:limH} in  Proposition \ref{pr:geometryofSaddlepoint}, there exists $T>0$ such that, for any $u \in H_k$ with $\|u\|_{\mathbb{X}(\Omega)}\geq T$, we have 
\begin{equation}
\label{eq:SPTI3}
J_{\lambda_k}(u)<-C_R.
\end{equation}
Hence, by \eqref{eq:SPTI4} and \eqref{eq:SPTI3} we conclude that
\[
\sup_{\substack{u \in H_{k} \\ \| u\|_{\mathbb{X}(\Omega)} =T}} J_{\lambda_k}(u)< - C_R \le \inf_{u \in \mathbb{P}_{k+1}} J_{\lambda_k}(u),
\]
so the functional $J_{\lambda_k}$ satisfies the geometric assumption $(I_3)$ and $(I_4)$ of \cite[Theorem 4.6]{Rabinowitz86}. Moreover, by Proposition \ref{psc} $J_{\lambda_k}$ satisfies the Palais-Smale condition. Then the Saddle Point Theorem (\cite[Theorem 4.6]{Rabinowitz86}) provides the existence of a critical point $u \in \mathbb{X}(\Omega)$ for the functional $J_{\lambda_k}$ with
\[
\mathcal{J}_{\lambda_k}(u)\leq \max_{v\in H_k \atop \|u\|_{\X(\Omega)}\leq T}\mathcal{J}_{\lambda_k}(v).
\]

\medskip
The case $(F_{-\infty})$ can be treated similarly considering the following decomposition:
$$\mathbb{X}(\Omega) = H^-_{k} \oplus \left(H^0_k\oplus\mathbb{P}_{k+1} \right),$$
where $H^{-}_k$ is the finite dimensional subspace while
$\mathbb{P}_{k+1}\oplus H^0_k$ is the infinite dimensional one. Reasoning as above, by using \eqref{eq:limH-} in place of \eqref{eq:limH} from Proposition \ref{pr:geometryofSaddlepoint}, we conclude the proof of the theorem.
\end{proof}
\medskip

\begin{remark}
Assumption $(f_{bc})$ covers the case $f(x,0)\neq 0$. This implies that the trivial solution is not allowed for this type of nonlinearities, like $f(x,t)=e^{-t^2}\mbox{sign}(t)$.
\end{remark}

Concerning the multiplicity result stated in Theorem \ref{th.main2}, its proof is an easy corollary of Theorem \ref{teosotto} below.

\begin{proof}[Proof of Theorem $\ref{th.main2}$]
We consider first the following decomposition
$$\mathbb{X}(\Omega)= H_k^- \oplus \left( H_k^0 \oplus \mathbb{P}_{k+1}\right).$$
As before, we can assume that $\lambda = \lambda_k < \lambda_{k+1}$ for some $k\in \mathbb{N}$. We now consider the sphere of radius $r>0$ in the finite dimensional subspace $H_k$, namely
$$S := \left\{ u \in H_k: \|u\|_{\mathbb{X}(\Omega)} = r \right\}.$$
By Lemma \ref{lm:C1upper} (since $u \in S \subset H_k$) and $(F_{pos})$, if $r>0$ is small enough, being the norms in $L^\infty(\Omega)$ and in $\X(\Omega)$ equivalent, as $H_k$ is finite dimensional, we get
$$
\sup_{u\in S}J_{\lambda_k}(u) <0.
$$
This fact,  coupled with the lower bound on $J_{\lambda_k}(u)$ for $u\in H_k^0 \oplus \mathbb{P}_{k+1}$ established in \eqref{eq:SPTI4}, allows to apply Theorem \ref{teosotto} with $E= \mathbb{X}(\Omega)$ and $\tilde{E}=H_k^0 \oplus \mathbb{P}_{k+1}$, which yields the desired conclusion, since $\gamma(S)=\mbox{dim}(H_k)$ (see \cite[Remark 5.62]{MMP}).
\end{proof}

\section*{Appendix}
\label{sc:app}
We recall some basic facts about the Krasnoselskii genus and an abstract result due to Rabinowitz.

Let $A\subset \R^N$ be a closed and symmetric set. The genus $\gamma(A)$ of $A$ is defined as the least integer $n$ (if it exists) such that there is an odd function $f\in C(A, \R^n\setminus \{0\})$.

Set $\Sigma: = \{A \subset \R^N\,:\, A \mbox{ is closed and symmetric}\}$.

\begin{theorem}[Theorem 1.9,  \cite{Rabinowitz78}]\label{teosotto}
Let $E$ be a real Banach space and let $I\in  C^1(E,\R)$ be even with $I(0) = 0$ and satisfy the
Palais-Smale condition at any level. Suppose further that
\begin{enumerate}
\item there exists a closed subspace $\tilde E\subset E$ of codimension $j$ and a constant $b$
such that $I_{\vert \tilde E}\geq b$, and
\item there exists $A\in \Sigma$ with $\gamma (A)=m>j$ and $\sup_AI < 0$.
\end{enumerate}
Then $I$ possesses at least $m -j$ distinct pairs of nontrivial critical points.
\end{theorem}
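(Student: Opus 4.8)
The plan is to run the classical minimax scheme based on the Krasnoselskii genus, as in Rabinowitz \cite{Rabinowitz78}. Since $\tilde E$ is a closed subspace of the Banach space $E$ of finite codimension $j$, it is complemented: there is a finite-dimensional subspace $V$ with $\dim V=j$ and $E=V\oplus\tilde E$, and the associated linear projection $P\colon E\to V$ along $\tilde E$ is bounded and odd. Identifying $V$ with $\R^{j}$, the elementary but decisive observation is the following: \emph{if $B\subset E$ is closed, symmetric, $0\notin B$ and $\gamma(B)\ge j+1$, then $B\cap\tilde E\neq\emptyset$}. Indeed, otherwise $Pu\neq 0$ for every $u\in B$, so $P|_{B}\colon B\to\R^{j}\setminus\{0\}$ would be an odd continuous map, forcing $\gamma(B)\le j$, a contradiction.

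Next I would introduce the minimax values. For $1\le i\le m-j$ put
\[
\Gamma_i:=\{\,B\subset E:\ B\ \text{closed and symmetric},\ 0\notin B,\ \gamma(B)\ge i+j\,\},
\qquad
c_i:=\inf_{B\in\Gamma_i}\ \sup_{u\in B}I(u).
\]
Each $\Gamma_i$ is nonempty: since $i\le m-j$ we have $\gamma(A)=m\ge i+j$, and $\sup_A I<0=I(0)$ forces $0\notin A$, so $A\in\Gamma_i$ and hence $c_i\le\sup_A I<0$. Conversely, every $B\in\Gamma_i$ with $i\ge 1$ satisfies $\gamma(B)\ge j+1$, so by the observation above $B\cap\tilde E\neq\emptyset$ and therefore $\sup_B I\ge b$; thus $b\le c_1\le c_2\le\dots\le c_{m-j}<0$, the monotonicity coming from $\Gamma_{i+1}\subset\Gamma_i$. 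In particular each $c_i$ is a real number bounded away from $0$.

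The core of the proof is the equivariant deformation lemma. Since $I\in C^{1}(E,\R)$ is even and satisfies the Palais-Smale condition at every level, for each $c<0$ the critical set $K_c:=\{u\in E:\ I(u)=c,\ I'(u)=0\}$ is compact, symmetric and does not contain $0$; moreover, for every open symmetric neighbourhood $U$ of $K_c$ there exist $\eps>0$ and an odd homeomorphism $\eta$ of $E$ --- obtained by integrating a suitably truncated odd pseudo-gradient vector field for $I$ --- such that $\eta(\{I\le c+\eps\}\setminus U)\subset\{I\le c-\eps\}$, and $\eta$ fixes $0$ and preserves each $\Gamma_i$ (genus being invariant under odd homeomorphisms). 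With this, one argues in the standard way: if $c:=c_i=c_{i+1}=\dots=c_{i+p}$ for some $p\ge 0$, choose $B\in\Gamma_{i+p}$ with $\sup_B I\le c+\eps$; assuming $\gamma(K_c)\le p$, pick an open symmetric $U\supset K_c$ with $\gamma(\overline U)\le p$ and set $B':=\overline{\eta(B\setminus U)}$. Subadditivity of the genus gives $\gamma(B')\ge\gamma(\overline{B\setminus U})\ge\gamma(B)-\gamma(\overline U)\ge(i+p+j)-p=i+j$, so $B'\in\Gamma_i$, while $\sup_{B'}I\le c-\eps$, contradicting the definition of $c_i$. Hence $\gamma(K_c)\ge p+1$.

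Finally I would collect the conclusions. If the $c_i$ are pairwise distinct, then each is a critical value with $K_{c_i}\neq\emptyset$, and since $c_i<0=I(0)$ these critical points are nontrivial; as $K_{c_i}$ is symmetric this produces $m-j$ distinct pairs $\pm u$. If instead two consecutive values coincide, the step above gives $\gamma(K_c)\ge 2$, hence $K_c$ is infinite, so $I$ has infinitely many nontrivial critical points, in particular at least $m-j$ pairs. The step I expect to demand the most care is the construction of the \emph{odd} deformation $\eta$ with the excision property near $K_c$: one must build an odd pseudo-gradient field (exploiting the evenness of $I$), truncate it away from $K_c$ and from the region where $\|I'\|$ is small, and verify that its time-one flow stays within the class of odd homeomorphisms preserving the $\Gamma_i$. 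Everything else --- the genus inequalities (monotonicity, subadditivity, the property that $\gamma(\overline U)=\gamma(K_c)$ for $U$ a suitable neighbourhood, and that $\gamma\ge 2$ forces a set to be infinite) together with the codimension/intersection observation --- is then routine.
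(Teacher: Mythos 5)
Your argument is correct, but note that the paper does not prove this statement at all: Theorem \ref{teosotto} is quoted verbatim from Rabinowitz's 1978 paper and used as a black box, so there is no internal proof to compare against. What you have written is essentially Rabinowitz's own genus-minimax argument, and all the key ingredients are in place: the intersection lemma (a closed symmetric set $B$ with $0\notin B$ and $\gamma(B)\ge j+1$ must meet every closed subspace of codimension $j$, via the bounded odd projection onto a finite-dimensional complement), the classes $\Gamma_i$ with $b\le c_1\le\dots\le c_{m-j}\le\sup_A I<0$, and the equivariant deformation plus excision/subadditivity step giving $\gamma(K_c)\ge p+1$ when $p+1$ of the levels coincide. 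Two small points deserve explicit care. First, in the excision step you should choose the symmetric neighbourhood $U$ of $K_c$ so that $0\notin\overline U$ (possible since $K_c$ is compact and $I(0)=0>c$), otherwise the subadditivity and continuity properties of the genus, which are stated for sets avoiding the origin, do not apply; also $B\setminus U$ is already closed, so the closure in the definition of $B'$ is redundant, and the genus bound should be read as invariance of $\gamma$ under the odd \emph{homeomorphism} $\eta$ (an odd continuous map can only decrease the genus, which here still suffices but is worth saying). Second, the deformation you invoke is the standard equivariant deformation theorem for even $C^1$ functionals satisfying the Palais--Smale condition (Theorem A.4 in Rabinowitz's CBMS notes); you correctly identify the construction of the odd, locally Lipschitz pseudo-gradient field as the only genuinely technical step. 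With those ingredients taken as known, the proof is complete and matches the source the paper relies on.
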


\end{document}